\theoremstyle{plain}
\newtheorem{theorem}{Theorem}[section]
\newtheorem{lemma}[theorem]{Lemma}
\newtheorem{prop}[theorem]{Proposition}
\newtheorem{cor}[theorem]{Corollary}
\theoremstyle{definition}
\newtheorem{exx}[theorem]{Example}
\newtheorem{deff}[theorem]{Definition}
\newtheorem{remark}[theorem]{Remark}
\newcommand{\Z}{\mathbb{Z}}
\newcommand{\Q}{\mathbb{Q}}
\newcommand{\N}{\mathbb{N}}
\newcommand{\C}{\mathbb{C}}
\newcommand{\K}{\mathbb{K}}
\newcommand{\BAG}{B^A(G)}
\newcommand{\BAH}{B^A(H)}
\newcommand{\gBAG}{\widetilde{B^A}(G)}
\newcommand{\MAG}{\mathcal{M}^A_G}
\newcommand{\MAH}{\mathcal{M}^A_H}
\newcommand{\BnG}{B^{C_n}(G)}
\newcommand{\BnGz}{B^{C_n}_{\Z[\zeta]}(G)}
\newcommand{\gBnGz}{\widetilde{B^{C_n}_{\Z[\zeta]}}(G)}
\newcommand{\MnG}{\mathcal{M}^{C_n}_G}
\newcommand{\DnG}{\mathcal{D}^{C_n}_G}
\newcommand{\eGnGg}{e^{G,n}_{G,g}}
\newcommand{\sGnHh}{s^{G,n}_{H,h}}
\newcommand{\eGnHh}{e^{G,n}_{H,h}}
\newcommand{\cGnHh}{c^{G,n}_{H,h}}
\title{On fibered Burnside rings, fiber change maps and cyclic fiber groups}
\author{
Benjam\'in Garc\'ia\\
Centro de Ciencias Matem\'aticas\\
Universidad Nacional Aut\'onoma de M\'exico\\
Morelia, Michoac\'an 58089\\
M\'exico\\
\texttt{benjamingarcia@matmor.unam.mx}
\and 
Alberto G. Raggi-Cárdenas\\
Centro de Ciencias Matem\'aticas\\
Universidad Nacional Aut\'onoma de M\'exico\\
Morelia, Michoac\'an 58089\\
M\'exico\\
\texttt{agraggi@gmail.com}
}
\date{\today}
\begin{document}

\maketitle

\begin{abstract}
Fibered Burnside rings appear as Grothendieck rings of fibered permutation representations of a finite group, generalizing Burnside rings and monomial representation rings. Their species, primitive idempotents and their conductors are of particular interest in representation theory as they encode information related to the structure of the group. In this note, we introduce fiber change maps between fibered Burnside rings, and we present results on their functoriality and naturality with respect to biset operations. We present some advances on the conductors for cyclic fiber groups, and fully determine them in particular cases, covering a wide range of interesting examples.
\end{abstract}

{\bf Keywords:} Burnside ring; fibered $G$-set; monomial representation\\

{\bf 2020 Mathematics Subject Classification:} 19A22

\section*{Introduction}

The $A$-fibered Burnside ring of a finite group $G$, denoted by $\BAG$, is defined as the Grothendieck ring of the category of finite $A$-fibered permutation representations of $G$, where $A$ is an abelian group referred to as the fiber group. This ring was introduced by Dress in \cite{Dress} as a generalization of the Burnside ring (where $A$ is the trivial group) and the monomial representation ring (where $A=\C^{\times}$). It has been extensively studied as a commutative ring, as a Mackey functor, and as a global biset functor whose modules, known as fibered biset functors, are a source of important examples in the theory of biset functors (\cite{BolCos},\cite{Romero}). 

The ring homomorphisms from $\BAG$ to the field of complex numbers, known as the \textit{species} of $\BAG$, as well as their respective primitive idempotents in $B^A_{\K}(G):=\K\otimes \BAG$ for some splitting field $\K$, are particularly interesting since they are related to questions about solubility and nilpotency, among other aspects of the structure of the group \cite{Muller}. The species and idempotents were determined by Boltje \& Y{\i}lmaz in \cite{BolYil} under the assumption that the fiber group $A$ has finite $\text{exp}(G)$-torsion. More precisely, the cyclotomic field $\K=\Q[\zeta]$ with $\zeta=e^{\frac{2\pi}{n}i}$, $n=\text{exp}(A_{\text{exp}(G)})$, is a splitting field for $\BAG$; the primitive idempotents of $B^A_{\K}(G)$ are in bijection with the species of $\BAG$, which are in one-to-one correspondence with the orbits of the set $\mathcal{D}^A_G=\left\{(H,\Phi)\; \middle| \;H\leq G,\Phi \in\text{Hom}(\text{Hom}(H,A),\K^{\times})\right\}$ on which $G$ acts by conjugation. The species and idempotent corresponding to the class of a pair $(H,\Phi)$ are denoted by $s^{G,A}_{H,\Phi}$ and $e^{G,A}_{H,\Phi}$, respectively.

The conductor of the primitive idempotent $e^{G,A}_{H,\Phi}$ over $\Z[\zeta]$ is the least positive integer $t$ such that $te^{G,A}_{H,\Phi}$ lies in $B^A_{\Z[\zeta]}(G)$, that we denote by $c^{G,A}_{H,\Phi}$. These numbers are invariants of $G$ attached to the ring structure of $\BAG$, hence preserved under isomophisms between fibered rings; they are known for $B(G)$ (\cite{DressVall},\cite{KraThe}, \cite{Nicolson}) and $D(G)$ \cite{Muller}, from which we can recover information about $G$, as for instance, its order or if it is abelian. The induction, restriction and conjugation maps, as well as the natural inclusion of $B(G)$ in $D(G)$, play an important role in the computation of the conductors of $D(G)$. More generally, if $A\leq C$, the ring $B^A(G)$ is often identified as a subring of $B^C(G)$. In an even more general setting, any map of abelian groups $f:A\longrightarrow C$ induces a \textit{fiber change map} $f_*:\BAG \longrightarrow B^C(G)$ that happens to be a ring homomorphism preserving the standard basis.

The aim of this note is to present some results on fiber change maps and significant progress on the conductors of primitive idempotents for the case of a cyclic fiber group. We set up the background material in Sections 1 and 2. Section 1 is devoted to the study of the subgroups $G^{[n]}\unlhd G$ for $n\in \N$, defined as the minimal normal subgroup such that the quotient $G/G^{[n]}$ is an $n$-torsion group, as well as their connection to the $n$-torsion of the Pontryagin dual $\text{Hom}(G,\C^{\times})$. Section 2 is a summary of definitions and results on fibered Burnside rings and their standard bases, ghost rings, mark morphisms, and their biset functor structure.

In Section 3, we recall Boltje \& Y{\i}lmaz's formulae for species and idempotents, and we point out how they simplify for $A=C_n$, a cyclic group of order $n$. In this case, we take $\DnG$ to be the set of all pairs $(H,hH^{[n]})$ with $H\leq G$ and $h\in H$, whose corresponding species and primitive idempotent are denoted by $\sGnHh$ and $\eGnHh$, respectively. Through Section 4, we study fiber change maps, and state some results about their functoriality and naturality with respect to biset operations in a wide generality. Concretely, we prove that fiber change is always a morphism of inflation functors, and that it is a morphism of biset functors if and only if $f$ is injective on $\text{tor}(A)$. This last result will allow us to identify $\BnG$ as a subring of $D(G)$ commuting with all biset operations.

Finally, in Section 5 we focus on the conductors of $\BnG$. We prove that $c^{G,n}_{H,h}$ divides $[N_G(H,hH^{[n]}):H^{[n]}][H^{[n]}:H']_0$, where $[H^{[n]}:H']_0$ denotes the largest square-free divisor of $[H^{[n]}:H']$, and our main result states that these numbers are equal whenever $(n,\text{exp}(G)/n)=1$. Although the last condition might seem a bit too restrictive, it covers several cases of interest besides $B(G)$ and $D(G)$: if $k$ is a field of positive characteristic $p$ on which $X^{\text{exp}(G)}-1$ splits, the monomial $k$-representation ring $D^k(G)$ is naturally isomorphic to $\BnG$ if $\text{exp}(G)=p^an$ with $(p,n)=1$.

\subsection*{Notation}

Throughout this note $G$ is a finite group, and we write $\text{exp}(G)$ to denote its exponent. If $A$ is an abelian group, $\text{tor}(A)$ denotes its subgroup of torsion elements, and $C_n$ stands for a cyclic finite group of order $n$. If $g,x\in G$, we write $^gx$ and $x^g$ for $gxg^{-1}$ and $g^{-1}xg$, respectively, and $^gH$ and $H^{g}$ for $gHg^{-1}$ and $g^{-1}Hg$, respectively, if $H\leq G$; if $\phi:H\longrightarrow A$ is a homomorphism, we write $^g\phi:{^gH}\longrightarrow A$ for the conjugated map $^g\phi(x):=\phi(x^g)$, $x\in {^gH}$. The Burnside ring of $G$ is denoted by $B(G)$, and its monomial representation ring by $D(G)$. If $H\leq G$, we write $\varphi_H:B(G)\longrightarrow \Z$ for the $H$-th mark map sending the class of a $G$-set $X$ to $|X^H|$. Finally, if $n$ is a positive integer, we write $n_0$ for its square-free part, that is, the product of all the prime divisors of $n$.

\section{Abelianization and $n$-torsion}

Let $n$ be a positive integer. For an abelian group $A$, an element $a\in A$ is said to be an \textit{$n$-torsion element} if $a^n=e$. The set of $n$-torsion elements of $A$ is a subgroup that we denote by $A_n$, and we say that $A$ is an \textit{$n$-torsion group} if $A_n=A$. The category of $n$-torsion abelian groups is a full subcategory of the category of abelian groups, closed under quotients and products.

For a finite group $G$, we define $G^{[n]}$ to be a minimal normal subgroup such that the quotient $G/G^{[n]}$ is an $n$-torsion abelian group. Such a subgroup exists and it is unique and characteristic, and it can be constructed as $\langle G',\{g^n\;|\;g\in G\}\rangle$. The quotient $G/G^{[n]}$ together with the canonical projection $\pi:G\longrightarrow G/G^{[n]}$ satisfy the following universal property: for any group homomorphism $\phi:G\longrightarrow A$ into an $n$-torsion abelian group $A$, there is a unique group homomorphism $\overline{\phi}:G/G^{[n]}\longrightarrow A$ such that $\phi=\overline{\phi} \circ \pi$.

\begin{lemma} \thlabel{lemma1}
    Let $G$ be a finite group, $n,m\in \N$.
    \begin{enumerate}
        \item $G^{[exp(G)]}=G'$, $G^{[1]}=G$;
        \item if $n$ divides $m$, then $G^{[m]}\leq G^{[n]}$;
        \item if $H\leq G$, then $H^{[n]}\leq G^{[n]}$;
        \item $(G/G')^{[n]}=G^{[n]}/G'$;
        \item $G^{[n]}=G^{[n']}=G^{[n'']}$, where $n'=(n,\text{exp}(G))$ and $n''=(n,\text{exp}(G/G'))$.
    \end{enumerate}
\end{lemma}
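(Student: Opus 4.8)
The plan is to rely throughout on the explicit construction $G^{[n]}=\langle G',\{g^n\mid g\in G\}\rangle$ and on the universal property recalled just before the lemma, reducing each claim to an elementary manipulation of generators. For (1), I would observe that $g^{\text{exp}(G)}=e$ for every $g\in G$, so the generating set $\{g^{\text{exp}(G)}\mid g\in G\}$ is trivial and $G^{[\text{exp}(G)]}=\langle G'\rangle=G'$; likewise $\{g^1\mid g\in G\}=G$ forces $G^{[1]}=G$. For (2), writing $m=nk$ gives $g^m=(g^n)^k\in G^{[n]}$ and $G'\le G^{[n]}$, so every generator of $G^{[m]}$ lies in $G^{[n]}$, whence $G^{[m]}\le G^{[n]}$; equivalently, an $n$-torsion group is $m$-torsion when $n\mid m$, so this is immediate from the universal property. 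For (3), I would note $H'\le G'$ and $\{h^n\mid h\in H\}\subseteq\{g^n\mid g\in G\}$, so all generators of $H^{[n]}$ already lie in $G^{[n]}$ (equivalently, the image of $H$ in the $n$-torsion group $G/G^{[n]}$ is $n$-torsion, so the map $H\to G/G^{[n]}$ factors through $H/H^{[n]}$, giving $H^{[n]}\le H\cap G^{[n]}\le G^{[n]}$).

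For (4), since $G/G'$ is abelian its commutator subgroup is trivial, so the construction gives $(G/G')^{[n]}=\langle\{(gG')^n\mid g\in G\}\rangle=\langle\{g^nG'\mid g\in G\}\rangle$; on the other hand $G^{[n]}/G'=\langle G',\{g^n\mid g\in G\}\rangle/G'=\langle\{g^nG'\mid g\in G\}\rangle$, and the two subgroups of $G/G'$ coincide.

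The only step that needs real care is (5), which I would prove by a Bézout argument. Since $n'=(n,\text{exp}(G))$ and $n''=(n,\text{exp}(G/G'))$ both divide $n$, part (2) already gives $G^{[n]}\le G^{[n']}$ and $G^{[n]}\le G^{[n'']}$, so it remains to prove the reverse inclusions, for which it suffices to show each generator $g^{n'}$ (resp. $g^{n''}$) lies in $G^{[n]}$. Writing $n'=an+b\,\text{exp}(G)$ with $a,b\in\Z$ and using $g^{\text{exp}(G)}=e$, I obtain $g^{n'}=(g^n)^a(g^{\text{exp}(G)})^b=(g^n)^a\in\langle g^n\rangle\le G^{[n]}$, which yields $G^{[n']}\le G^{[n]}$. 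For the second equality I would use that $g^{\text{exp}(G/G')}\in G'$ (as $gG'$ has order dividing $\text{exp}(G/G')$ in the abelianization), so writing $n''=cn+d\,\text{exp}(G/G')$ gives $g^{n''}=(g^n)^c(g^{\text{exp}(G/G')})^d\in G^{[n]}$, hence $G^{[n'']}\le G^{[n]}$.

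The main things to be careful about in (5) are that these powers of a single element $g$ commute with one another, so the factorization of $g^{n'}$ is legitimate even though $G$ may be nonabelian, and that negative exponents still land in the cyclic group $\langle g^n\rangle\le G^{[n]}$. I expect (5) to be the only genuine obstacle, the remaining parts being direct consequences of the generator description; alternatively, the second equality in (5) could be deduced from (4) by applying the first equality to the abelian group $G/G'$, since $\text{exp}(G/G')$ governs the relevant gcd there.
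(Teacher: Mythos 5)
Your proposal is correct and follows essentially the same route as the paper: parts (1)--(4) are the elementary generator manipulations the paper dismisses as straightforward, and part (5) is the same B\'ezout argument (write the gcd as an integer combination and reduce modulo $G'$, using $g^{\exp(G)}=e$ and $g^{\exp(G/G')}\in G'$). The only cosmetic difference is that the paper notes $n''\mid n'\mid n$ to get the chain $G^{[n]}\leq G^{[n']}\leq G^{[n'']}$ and then needs just one B\'ezout computation ($G^{[n'']}\leq G^{[n]}$) to squeeze all three together, whereas you prove the two reverse inclusions separately --- equally valid, marginally more work.
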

\begin{proof}
    Assertions 1 to 4 are straightforward. To prove 5, note that the containments $G^{[n]}\leq G^{[n']}\leq G^{[n'']}$ are clear from part 2. Expressing $n''=an+b\text{exp}(G/G')$ for some $a,b\in \Z$, then for any $g\in G$ we have that
    $$g^{n''}=(g^a)^{n}(g^b)^{\text{exp}(G/G')}\equiv (g^a)^{n} \;\text{mod }G',$$
    hence $G^{[n'']}\leq G^{[n]}$.
\end{proof}

\begin{lemma}
    Let $G$ be a finite group and $n\in \N$. Then for any abelian group $A$, there are natural isomorphisms
    $$\text{Hom}(G,A)_n\cong \text{Hom}(G,A_n)\cong \text{Hom}(G/G^{[n]},A_n)\cong \text{Hom}(G/G^{[n]},A).$$
\end{lemma}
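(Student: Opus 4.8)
The plan is to establish each of the three isomorphisms separately as induced by an evident natural map, and then to observe that the whole chain is natural in $A$ because each link is built from a natural transformation of functors.

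First I would recall that for an abelian group $A$ the set $\text{Hom}(G,A)$ is itself an abelian group under pointwise multiplication, so that the $n$-th power of a homomorphism $\phi$ is the map $g\mapsto \phi(g)^n$. Consequently $\phi$ lies in the $n$-torsion subgroup $\text{Hom}(G,A)_n$ precisely when $\phi(g)^n=e$ for every $g\in G$, that is, exactly when the image of $\phi$ is contained in $A_n$. This identifies $\text{Hom}(G,A)_n$ with the set of homomorphisms $G\to A$ factoring through the inclusion $A_n\hookrightarrow A$, giving the first isomorphism $\text{Hom}(G,A)_n\cong \text{Hom}(G,A_n)$; that this bijection is a group homomorphism is immediate, since composition with the inclusion is pointwise multiplicative.

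For the middle isomorphism I would invoke the universal property of $G^{[n]}$ recalled before the statement: since $A_n$ is an $n$-torsion abelian group, every homomorphism $G\to A_n$ factors uniquely through the projection $\pi:G\to G/G^{[n]}$. Thus precomposition with $\pi$ defines a bijection $\text{Hom}(G/G^{[n]},A_n)\to \text{Hom}(G,A_n)$, which is a group isomorphism because precomposition respects pointwise products. For the last isomorphism I would use that $G/G^{[n]}$ is, by the very definition of $G^{[n]}$, an $n$-torsion group; hence the image of any homomorphism $G/G^{[n]}\to A$ consists of $n$-torsion elements and therefore lands in $A_n$. This shows that the inclusion $A_n\hookrightarrow A$ induces the isomorphism $\text{Hom}(G/G^{[n]},A_n)\cong \text{Hom}(G/G^{[n]},A)$.

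Finally I would check naturality in $A$. A homomorphism $f:A\to A'$ restricts to $A_n\to A'_n$ since it sends $n$-torsion to $n$-torsion, and the inclusions $A_n\hookrightarrow A$ assemble into a natural transformation; likewise postcomposition with $f$ commutes with precomposition by $\pi$. Chasing these compatibilities through the three squares shows that each isomorphism commutes with the maps induced by $f$, so the chain is natural in $A$. I do not expect a genuine obstacle here: the only point requiring care is the bookkeeping of the two distinct roles played by the torsion functor $A\mapsto A_n$—once on the source side, through $\text{Hom}(G,-)_n$, and once on the target side, through $\text{Hom}(G,A_n)$—together with the verification that the evident comparison map between them is indeed the natural one.
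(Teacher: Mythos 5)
Your proof is correct and follows essentially the same route as the paper: the first isomorphism from the observation that pointwise $n$-torsion of $\phi$ means $\mathrm{Im}(\phi)\leq A_n$, the second from the universal property of $G/G^{[n]}$, and the third from the fact that homomorphic images of $n$-torsion groups are $n$-torsion. Your explicit naturality check is a welcome addition that the paper leaves implicit.
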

\begin{proof}
    The first isomorphism is clear as the operation in $\text{Hom}(G,A)$ is pointwise multiplication of functions, thus $\text{Im}(\phi)\leq A_n$ for all $\phi\in \text{Hom}(G,A)_n$. The second isomorphism follows by the universal property of $G/G^{[n]}$. The last isomorphism follows since the homomorphic image of an $n$-torsion group is also $n$-torsion.
\end{proof}

When $A=\Q/\Z$, the hom-set group $\text{Hom}(G,\Q/\Z)$ is the well-known \textit{Pontryagin dual} of $G$. The group $\Q/\Z$ is isomorphic to the group $\text{tor}(\C^{\times})$, for which $(\C^{\times})_n$ is cyclic of order $n$ generated by a primitive $n$-th root of $1$. Moreover, for any field $\K$, its group of torsion units $\text{tor}(\K^{\times})$ can be embedded in $\Q/\Z$ as a subgroup. This is clear if we consider the $p$-primary part of $\text{tor}(\K^{\times})$ for each prime $p$, which is either cyclic of order a power of $p$ or isomorphic to $\Z_{p^{\infty}}$, the $p$-primary part of $\Q/\Z$.

\begin{lemma}
    Let $G$ be a finite group and $n$ a positive integer. Then 
    \begin{enumerate}
        \item $Hom(G,C_n)$ and $G/G^{[n]}$ are isomorphic;
        \item $\text{Hom}(\text{Hom}(G,C_n),\C^{\times})$ and $G/G^{[n]}$ are naturally isomorphic;
        \item If $\K$ is a field and $n=|\K^{\times}_{\text{exp}(G/G')}|$, then $\text{Hom}(\text{Hom}(G,\K^{\times}),\C^{\times})$ and $G/G^{[n]}$ are naturally isomorphic.
    \end{enumerate}
\end{lemma}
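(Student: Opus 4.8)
The plan is to reduce all three parts to Pontryagin duality for finite abelian groups, using the preceding lemma to trade $\text{Hom}(G,C_n)$ for the character group of $Q:=G/G^{[n]}$. Write $M^{\vee}:=\text{Hom}(M,\C^{\times})$ for the Pontryagin dual of a finite abelian group $M$. Since $C_n\cong(\C^{\times})_n$, the isomorphisms of the preceding lemma give $\text{Hom}(G,C_n)\cong\text{Hom}(G,(\C^{\times})_n)=\text{Hom}(G,\C^{\times})_n\cong\text{Hom}(Q,\C^{\times})=Q^{\vee}$, natural in $G$ once an isomorphism $C_n\cong(\C^{\times})_n$ has been fixed. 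Part 1 then follows immediately from the (non-canonical) fact that a finite abelian group is isomorphic to its dual: $\text{Hom}(G,C_n)\cong Q^{\vee}\cong Q=G/G^{[n]}$; equivalently one decomposes $Q$ into cyclic factors $C_d$ with $d\mid n$ and uses $\text{Hom}(C_d,C_n)\cong C_{(d,n)}=C_d$.

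For part 2, I would apply the contravariant functor $\text{Hom}(-,\C^{\times})$ to the natural isomorphism $\text{Hom}(G,C_n)\cong Q^{\vee}$, obtaining a natural isomorphism $\text{Hom}(\text{Hom}(G,C_n),\C^{\times})\cong Q^{\vee\vee}$. The substantive step is Pontryagin biduality: the evaluation map $\text{ev}\colon Q\to Q^{\vee\vee}$, $q\mapsto(\chi\mapsto\chi(q))$, is a natural isomorphism for finite abelian groups. Combined with the functoriality of $G\mapsto G/G^{[n]}$, this yields the claimed natural isomorphism $\text{Hom}(\text{Hom}(G,C_n),\C^{\times})\cong G/G^{[n]}$.

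For part 3 I would reduce to part 2. Any homomorphism $G\to\K^{\times}$ has abelian image, hence factors through $G/G'$, so its image lies in $(\K^{\times})_{\text{exp}(G/G')}$; as a finite subgroup of $\K^{\times}$ this is cyclic of order $n=|\K^{\times}_{\text{exp}(G/G')}|$, and being cyclic of order $n$ it coincides with the full $n$-torsion $(\K^{\times})_n\cong C_n$. Therefore $\text{Hom}(G,\K^{\times})=\text{Hom}(G,(\K^{\times})_n)\cong\text{Hom}(G,C_n)$ naturally in $G$, and applying $\text{Hom}(-,\C^{\times})$ together with part 2 gives $\text{Hom}(\text{Hom}(G,\K^{\times}),\C^{\times})\cong G/G^{[n]}$.

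The one place needing genuine care, rather than bookkeeping, is the naturality asserted in parts 2 and 3: it is precisely Pontryagin biduality—the evaluation map being a natural isomorphism, which rests on $\text{tor}(\C^{\times})\cong\Q/\Z$ being an injective cogenerator so that characters separate points—that upgrades the non-canonical isomorphism of part 1 to a canonical one, so I would be careful to route the argument through the bidual $Q^{\vee\vee}$ rather than through the non-natural single duality. A secondary subtlety is verifying in part 3 that $(\K^{\times})_{\text{exp}(G/G')}$ is all of $(\K^{\times})_n$, ensuring no homomorphism $G\to\K^{\times}$ escapes the $n$-torsion so that the reduction to $C_n$ is exact.
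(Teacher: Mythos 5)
Your proof is correct and takes essentially the same route as the paper's: use the preceding lemma to identify $\text{Hom}(G,C_n)$ with the Pontryagin dual of $G/G^{[n]}$, invoke duality (for part 1) and biduality via the evaluation map (for part 2) of finite abelian groups, and reduce part 3 to part 2 using that the image of any morphism $G\to\K^{\times}$ lies in $\K^{\times}_{\text{exp}(G/G')}$, which is finite and cyclic. Your write-up simply makes explicit the details (the bidual routing, and the verification that $\K^{\times}_{\text{exp}(G/G')}=(\K^{\times})_n$) that the paper leaves implicit.
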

\begin{proof}
    By the previous lemma, $\text{Hom}(G,C_n)$ and $\text{Hom}(G/G^{[n]},\C^{\times})$ are naturally isomorphic, and since $G/G^{[n]}$ is abelian, assertions 1 and 2 follow from the properties of the Pontryagin dual of a finite abelian group. Assertion 3 now follows since the image of any morphism $G\longrightarrow \K^{\times}$ lies in $\K^{\times}_{\text{exp}(G/G')}$, which is finite and cyclic.
\end{proof}

\section{Fibered Burnside rings}

This section is meant to be a summary of definitions and classic results related to fibered Burnside rings, and we refer the reader to \cite[\S 1]{BolCos} or \cite[\S 1]{Romero} for further details. We refer to \cite{Bouc} for a classic comprehensive text on biset functors.

Given a finite group $G$ and an abelian group $A$, a \textit{finite $A$-fibered $G$-set} is a $G\times A$-set with finitely many orbits on which the action of $A$ is free. The $A$-fibered $G$-sets together with the equivariant functions form a category $_G\text{set}^A$, provided with finite disjoint unions and a tensor product. The \textit{$A$-fibered Burnside ring} of $G$ is defined as the Grothendieck ring of the category $_G\text{set}^A$ with respect to disjoint union, denoted by $\BAG$. It is a free abelian group and a commutative ring with $1$ with a basis given by the isomorphism classes of transitive $A$-fibered $G$-sets, which are in bijection with orbits of monomial pairs: a \textit{monomial pair} or \textit{$A$-subcharacter} of $G$ is a pair $(H,\phi)$ with $H\leq G$ and $\phi\in \text{Hom}(H,A)$; the group $G$ acts by conjugation on the set $\MAG$ consisting of all monomial pairs, and we write $[H,\phi]_G$ for the orbit of $(H,\phi)$ and $N_G(H,\phi)$ for its stabilizer. The ring $\BAG$ is then isomorphic to the free abelian group on the set of orbits $G\backslash \MAG$, with multiplication given by
$$[H,\phi]_G[K,\psi]_G:=\sum_{g\in [H\backslash G/K]}[H\cap {^gK},\phi|_{H\cap {^gK}}{(^g\psi)}|_{H\cap {^gK}}]_G,$$
for $(H,\phi),(K,\psi)\in \MAG$, where $[H\backslash G/K]$ denotes a set of representatives of the double cosets $H\backslash G/K$, and the basis element $[G,1]_G$ acts as multiplicative identity. From now on we identify these rings as the same and we will refer to $G\backslash \MAG$ as the \textit{standard basis} or \textit{monomial basis} of $\BAG$. 

The group $G$ acts on the product ring $\prod_{H\leq G} \Z \text{Hom}(H,A)$ via conjugation maps, and the \textit{ghost ring} of $\BAG$ is defined as the subring $\gBAG:=\left(\prod_{H\leq G} \Z \text{Hom}(H,A)\right)^G$ of fixed points of this action. It comes with an injective ring homomorphism $\rho=\rho_G:\BAG \longrightarrow \gBAG$, known as the \textit{mark morphism}, mapping a basis element $[H,\phi]_G$ to the tuple $\rho([H,\phi]_G)$ whose $K$-th coordinate for $K\leq G$ is 
$$\sum_{\substack{gH\in G/H,\\
K\leq {^gH}}}{^g\phi|_K}=\sum_{\psi\in Hom(K,A)}\gamma^G_{(K,\psi),(H,\phi)}\psi \in \Z \text{Hom}(K,A),$$
where $\gamma^G_{(K,\psi),(H,\phi)}=\#\{gH\in G/H\;|\;K\leq {^gH},\psi={^g\phi}|_{K}\}$. The ghost ring is also free of the same rank as $\BAG$, and $\text{Coker}(\rho)$ is a torsion group annihilated by $|G|$.

\begin{exx}
    If $\K$ is a field, a \textit{finite $G$-line bundle over $\K$}, also known as a \textit{monomial $\K$-representation of $G$}, is a finite dimensional $\K G$-module $M$ together with a set $\mathcal{L}$ of $1$-dimensional subspaces that are permuted by the action of $G$ and such that $M=\bigoplus_{L\in \mathcal{L}}L$. Finite $G$-line bundles form a category $_{\K G}mon$ with direct sums and tensor products, and its Grothendieck ring $D^{\K}(G)$ (denoted by ${R^{ab}_{\K}}_+(G)$ in \cite{Bol2}), known as the \textit{monomial $\K$-representation ring of $G$}, is naturally isomorphic to $B^{\K^{\times}}(G)$. When $\K=\C$, we get the classic monomial representation ring. See \cite[\S 1, \S 5]{Bol2} for details.
\end{exx}

For a commutative ring $R$, we consider the scalar extension $B^A_R(G):=R\otimes \BAG$ as an $R$-algebra. We can easily identify $R\otimes \gBAG$ with the subring $\widetilde{B^A_R}(G)$ of fixed points of $\prod_{H\leq G}R\text{Hom}(H,A)$ for the action induced by conjugation maps just as before, and the extended mark morphism $\rho:B^A_R(G) \longrightarrow \widetilde{B^A_R}(G)$ is a map between torsion free $R$-algebras. When $R=\K$ is a field of characteristic zero, the map $\rho:B^A_{\K}(G)\longrightarrow \widetilde{B^A_{\K}}(G)$ is an isomorphism since $\text{Coker}(\rho)$ is a $\K$-vector space annihilated by $|G|$. Moreover, if $\BAG$ has finite rank, the $\K$-algebra $B^A_{\K}(G)$ is semisimple, and we say that $\K$ is a \textit{splitting field} for $\BAG$ if $B^A_{\K}(G)$ is isomorphic to an algebra of the form $\K^N$ for some positive integer $N$. Any extension of a splitting field of $\BAG$ is again a splitting field.

Just as other representation rings, fibered Burnside rings can be regarded as biset functors. More precisely, if $U$ is a finite $(H,G)$-biset and $X$ is an $A$-fibered $G$-set for finite groups $H$ and $G$, we can construct an $A$-fibered $H$-set $U\otimes_G X$ as the subset of $U\times_G X$ (regarding $X$ as a $(G,A)$-biset) consisting of all the elements on which $A$ acts freely. Passing to the fibered rings, we get a map of abelian groups $B^A(U):\BAG \longrightarrow \BAH$ for each $U$ that is additive and functorial with respect to composition of bisets, thus defining a linear functor $B^A:\mathcal{C}\longrightarrow \text{Ab}$, where $\mathcal{C}$ stands for the biset category. Moreover, $B^A(\text{res}_{\phi}):\BAH \longrightarrow \BAG$ is a ring homomorphism for each group homomorphism $\phi:G\longrightarrow H$, and the Frobenius identities hold for $B^A(\text{ind}_{\phi})$, hence $B^A$ is a Green biset functor.

Since transitive bisets generate $B(H,G)$, it is useful to have an expression for a transtive biset applied to an element of the standard basis. We recall that for a subgroup $E\leq H\times G$, there are subgroups 
$$k_1(E)=\left\{h\in H\;\middle|\;(h,1)\in E\right\}\unlhd p_1(E)=\left\{h\in H\;\middle|\;\exists g\in G\;\text{s.t.}\;(h,g)\in E\right\}\leq H,$$
and 
$$k_2(E)=\left\{g\in G\;\middle|\;(1,g)\in E\right\}\unlhd p_2(E)=\left\{g\in G\;\middle|\;\exists h\in H\;\text{s.t.}\;(h,g)\in E\right\}\leq G$$ 
satisfying 
$$p_1(E)/k_1(E)\cong E/\left(k_1(E)\times k_2(E)\right)\cong p_2(E)/k_2(E).$$
For another subgroup $T\leq K\times H$, the composite of $T$ and $E$ is defined as
$$T\star E=\left\{(k,g)\in K\times E\;\middle|\;\exists h\in H\;\text{s.t.}\;(k,h)\in T,(h,g)\in E\right\}\leq K\times G.$$
If $(K,\phi)\in \MAG$ and $\Gamma_{\phi}(K)\leq G\times A$ is the graph of $\phi$, then by the Mackey formula for transitive bisets we have
$$\left(H\times G/E\right)\times_G\left(G\times A/\Gamma_{\phi}(K)\right)\cong \bigsqcup_{g\in [p_2(E)\backslash G/K]} H\times A/E\star {^{(g,1)}\Gamma_{\phi}(K)},$$
as $H\times A$-sets, where the right-hand side expression is a decomposition in orbits, so the $A$-free part of it is precisely the union of those components for which $E\star {^{(g,1)}\Gamma_{\phi}(K)}$ is the graph of some subcharacter, which is the case if and only if $k_2(E\star {^{(g,1)}\Gamma_{\phi}(K)})=\{e\}$ if and only if $k_2(E)^g \cap K\leq \text{ker}(\phi)$. For such $g$, we set $L_g:=p_1(E\star {^g\Gamma_{\phi}(K)})\leq H$, and we define a morphism $\psi_g:L_g\longrightarrow  A$ as follows: if $y\in L_g$, there is some $x\in {^gK}$ such that $(y,x)\in E$, then $\psi_g(y):=\phi(x^g)$. Passing to the fibered rings, we get
\begin{align}
  B^A\left(H\times G/E\right)\left([K,\phi]_G\right) &= \sum_{\substack{g\in [p_2(E)\backslash G/K],\\
k_2(E)^g \cap K\leq \text{ker}(\phi)}} [L_g,\psi_g]_H.
\end{align}

Formulae for basic biset operations on basic elements are well known:
    \begin{enumerate}
        \item If $H\leq G$, then $\text{ind}^G_H\left([K,\phi]_H\right)=[K,\phi]_G$ for $(K,\phi)\in \MAH$, and 
        $$\text{res}^G_H\left([L,\psi]_G\right)=\sum_{g\in [H\backslash G/L]}[H\cap {^gL},{^g\psi}|_{H\cap {^gL}}]_H$$ 
        for $(L,\psi)\in \MAG$;
        \item If $N\unlhd G$ and $\pi:G\longrightarrow G/N$ is the canonical projection, then $\text{inf}^G_{G/N}\left([H/N,\phi]_{G/N}\right)=[H,\phi\circ \left(\pi|_H\right)]_G$ for $(H/N,\phi)\in \mathcal{M}^A_{G/N}$, and
        $$\text{def}^{G}_{G/N}\left([K,\psi]_G\right)=\begin{cases}
            [KN/N,\psi^{\pi}]_{G/N} &\text{if }K\cap N\leq \text{ker}(\psi),\\
            0 &\text{otherwise},
        \end{cases}$$
        for $(K,\psi)\in \MAG$, where $\psi^{\pi}(kN):=\psi(k)$ is well-defined when $K\cap N\leq \text{ker}(\psi)$.
        \item If $\alpha:G\longrightarrow H$ is an isomorphism, then $\text{iso}(\alpha)\left([K,\phi]_G\right)=[\alpha(K),\phi\circ (\alpha^{-1}|_{\alpha(K)})]_H$ for $(K,\phi)\in \MAG$. If $H\leq G$ and $g\in G$, then $^g[K,\phi]_H=[{^gK},{^g\phi}]_{^gH}$ for $(K,\phi)\in \MAH$.
    \end{enumerate}

\section{Species and idempotents}

% 1. Setting the stage: $\BnG$ as a subring of $D(G)$
% 2. The species of $\BnG$
% 3. The idempotents
% 4. The conductors in the case of a product of maximal powers of primes
% 5. Examples: Rings of monomial $k$-representations for algebraically closed fields

A \textit{species} of $\BAG$ is a ring homomorphism from $\BAG$ to the field of complex numbers. We write $\text{Sp}(\BAG):=\text{Hom}_{ring}(\BAG,\C)$ for the set of species of $\BAG$. Each species $s$ of $\BAG$ extends to a unique $\C$-algebra homomorphism $s:B^A_{\C}(G)\longrightarrow \C$, giving an injection of $\text{Sp}(\BAG)$ into $\text{Hom}_{\C-alg}(B^A_{\C}(G),\C)$. Boltje \& Y{\i}lmaz have determined the species when $A_{\text{exp}(G)}$ is finite, which we recall here.

Assume that $A$ is an abelian group with finite $\text{exp}(G)$-torsion, and let $\K$ be a field of characteristic zero containing a root of $1$ of order $\text{exp}(A_{\text{exp}(G)})$. Let $\mathcal{D}^A_G$ be the set of all ordered pairs $(H,\Phi)$ for $H\leq G$ and $\Phi \in \text{Hom}(\text{Hom}(H,A),\K^{\times})$; it is a finite $G$-set under $G$-conjugation, and we write $[H,\Phi]_G$ for the orbit of a pair $(H,\Phi)$, and $N_G(H,\Phi)$ for its stabilizer. For each $(H,\Phi)$, set $s^{G,A}_{H,\Phi}$ to be the composite of maps
$$\xymatrix{
\BAG\ar[rr]^{\text{res}^G_H} &&\BAH\ar[rr]^-{\pi_H} &&\Z \text{Hom}(H,A)\ar[rr]^-{\Phi} &&\K
},$$
where $\pi_H$ maps $[K,\phi]_H$ to $\phi$ if $K=H$ and to $0$ otherwise, and the last arrow is the linear extension of $\Phi$. It is a ring homomorphism, and we have that $s^{G,A}_{H,\Phi}=s^{G,A}_{K,\Psi}$ if $(H,\Phi)$ and $(K,\Psi)$ are $G$-conjugates. For a basis element $[K,\phi]_G$, we have
$$s^{G,A}_{H,\Phi}\left([K,\phi]_G\right)=\sum_{\substack{
gK\in G/K,\\
H\leq {^gK}
}}\Phi\left((^g\phi|_H)^{-1}\right).$$

\begin{theorem}[Boltje - Y{\i}lmaz, \cite{BolYil} Theorem 3.1]
    The map
    $$s^{G,A}:B^A_{\K}(G)\longrightarrow \prod_{[H,\Phi]_G\in G\backslash \mathcal{D}^A_G}\K, \; x\mapsto \left(s^{G,A}_{H,\Phi}(x)\right)_{[H,\Phi]_G}$$
    is a $\mathbb{K}$-algebra isomorphism. In particular, every $\mathbb{K}$-algebra homomorphism from $B^A_{\K}(G)$ to $\K$ is of the form $s^{G,A}_{H,\Phi}$ for some $(H,\Phi)\in \mathcal{D}^A_G$, and $s^{G,A}_{H,\Phi}=s^{G,A}_{K,\Psi}$ if and only if $[H,\Phi]_G=[K,\Psi]_G$.
\end{theorem}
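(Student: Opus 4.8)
The plan is to realize the claimed map as the composite of the mark morphism with a complete splitting of the ghost ring, and then to match the resulting coordinate projections with the explicit species. Since $\K$ has characteristic zero, the extended mark morphism $\rho:B^A_{\K}(G)\longrightarrow \widetilde{B^A_{\K}}(G)$ is already known to be a $\K$-algebra isomorphism, its cokernel being a $\K$-vector space annihilated by $|G|$. Thus it suffices to decompose the ghost ring $\widetilde{B^A_{\K}}(G)=\left(\prod_{H\leq G}\K\text{Hom}(H,A)\right)^{G}$ as a product of copies of $\K$ indexed by $G\backslash \mathcal{D}^A_G$, in a way compatible with the species.

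First I would observe that for every $H\leq G$ the group $\text{Hom}(H,A)$ is finite and its exponent divides $\text{exp}(A_{\text{exp}(G)})$, because any homomorphism $H\to A$ has image in $A_{\text{exp}(G)}$. Since $\K$ contains a root of unity of order $\text{exp}(A_{\text{exp}(G)})$, the field $\K$ is a splitting field for the group algebra of each finite abelian group $\text{Hom}(H,A)$, so character theory yields a $\K$-algebra isomorphism $\K\text{Hom}(H,A)\cong \prod_{\Phi}\K$, where $\Phi$ runs over the characters $\Phi\in\text{Hom}(\text{Hom}(H,A),\K^{\times})$ and the $\Phi$-component of $\phi\in\text{Hom}(H,A)$ is $\Phi(\phi)$ (or $\Phi(\phi^{-1})$, once the contragredient normalization is fixed). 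Taking the product over all $H\leq G$ gives a $\K$-algebra isomorphism $\prod_{H\leq G}\K\text{Hom}(H,A)\cong \prod_{(H,\Phi)\in \mathcal{D}^A_G}\K$.

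The key step is to check that this isomorphism is $G$-equivariant. The group $G$ acts on the source by sending the $H$-block to the ${}^gH$-block via $\phi\mapsto {}^g\phi$, and the naturality of the character decomposition translates this into the permutation action $(H,\Phi)\mapsto ({}^gH,{}^g\Phi)$ on $\mathcal{D}^A_G$, which is exactly the conjugation action defining the orbit set $G\backslash \mathcal{D}^A_G$. Passing to $G$-fixed points, a fixed tuple in $\prod_{(H,\Phi)}\K$ is constant on $G$-orbits, so evaluation at one representative per orbit gives $\left(\prod_{(H,\Phi)}\K\right)^{G}\cong \prod_{[H,\Phi]_G}\K$. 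Composing with $\rho$ then produces a $\K$-algebra isomorphism $B^A_{\K}(G)\xrightarrow{\sim}\prod_{[H,\Phi]_G\in G\backslash \mathcal{D}^A_G}\K$.

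It remains to identify the $[H,\Phi]_G$-component of this composite with the species $s^{G,A}_{H,\Phi}$. This reduces to the identity $\text{proj}_H\circ \rho=\pi_H\circ \text{res}^G_H$ as maps $B^A_{\K}(G)\to \K\text{Hom}(H,A)$, after which applying the character $\Phi$ reproduces the stated formula for $s^{G,A}_{H,\Phi}([K,\phi]_G)$ term by term. I expect this matching to be the main obstacle, since it requires reconciling the single-coset sum appearing in the $H$-coordinate of the mark morphism with the double-coset sum produced by $\text{res}^G_H$ (here one uses that $HgK=gK$ precisely when $H\leq {}^gK$), and pinning down the contragredient convention that accounts for the inverse inside $\Phi$. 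Once the components are identified, the ``in particular'' assertions are immediate: an isomorphism onto a finite product of copies of $\K$ has exactly the coordinate projections as its $\K$-algebra homomorphisms to $\K$, so every such homomorphism is some $s^{G,A}_{H,\Phi}$, and two of them coincide precisely when they index the same orbit, that is, when $[H,\Phi]_G=[K,\Psi]_G$.
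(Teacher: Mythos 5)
The paper itself gives no proof of this statement (it is quoted from Boltje--Y{\i}lmaz), so there is nothing internal to compare against; judged on its own, your argument is correct and is essentially the standard route (and, in substance, that of the cited source): the mark morphism is a $\K$-algebra isomorphism in characteristic zero, each block $\K\,\text{Hom}(H,A)$ splits into copies of $\K$ indexed by its characters because $\K$ contains a root of unity of order $\text{exp}(A_{\text{exp}(G)})$, which bounds the exponent of every $\text{Hom}(H,A)$, and the splitting is $G$-equivariant, so taking fixed points yields $\prod_{[H,\Phi]_G}\K$. The two technical points you flag resolve exactly as you predict: the single-coset/double-coset reconciliation works because ${}^g\phi|_H$ depends only on the coset $gK$ and $HgK=gK$ precisely when $H\leq {}^gK$, giving $\text{proj}_H\circ\rho=\pi_H\circ \text{res}^G_H$; and taking the contragredient normalization $\phi\mapsto\left(\Phi(\phi^{-1})\right)_{\Phi}$ is harmless (it differs from the standard Fourier map by the orbit-compatible relabeling $\Phi\mapsto\Phi^{-1}$) and makes the coordinates agree verbatim with the paper's displayed formula for $s^{G,A}_{H,\Phi}$.
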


Taking $\K = \C$, we get that the species of $\BAG$ are exactly the maps $s^{G,A}_{H,\Phi}$ for $(H,\Phi)\in \mathcal{D}^A_G$, and they are in one-to-one correspondence with the orbits of $\mathcal{D}^A_G$; in fact, the species take values in $\Z[\zeta]$, where $\zeta$ is a primitive $n$-th root of $1$, $n=\text{exp}(A_{\text{exp}(G)})$. Since $B^{A}_{\K}(G)\cong \prod_{[H,\Phi]_G\in G\backslash \mathcal{D}^A_G}\K$, then for each $(H,\Phi)\in \mathcal{D}^A_G$ there is a primitive idempotent $e^{G,A}_{H,\Phi}$ in $B^{A}_{\K}(G)$ determined by the property
$$s^{G,A}_{K,\Psi}\left(e^{G,A}_{H,\Phi}\right)=\begin{cases}
    1 &\text{if }[K,\Psi]_G=[H,\Phi]_G,\\
    0 &\text{otherwise.}
\end{cases}$$

\begin{theorem}[Boltje - Y{\i}lmaz, \cite{BolYil} Thm. 3.2] \thlabel{BolYilIds}
    For each $(H,\Phi)\in \mathcal{D}^A_G$, we have that
    \begin{align}
        e^{G,A}_{H,\Phi}=\frac{1}{|N_G(H,\Phi)||\text{Hom}(H,A)|}\sum_{\substack{
        K\leq H,\\
        \Phi|_{K^{\perp}}=1
        }}\sum_{\phi\in \text{Hom}(H,A)}|K|\mu(K,H)\Phi(\phi^{-1})[K,\phi|_K]_G
    \end{align}
    in $B^A_{\K}(G)$, where $K^{\perp}:=\text{ker}\left(\text{res}^H_K\right)$ and $\mu$ is the Möbius function of the poset of subgroups of $G$.
\end{theorem}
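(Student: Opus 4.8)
The plan is to exploit the species isomorphism of the preceding theorem: since $s^{G,A}\colon B^A_{\K}(G)\xrightarrow{\ \sim\ }\prod_{[H,\Phi]_G}\K$ is a $\K$-algebra isomorphism, an element of $B^A_{\K}(G)$ is uniquely determined by its values under all species $s^{G,A}_{K,\Psi}$, and $e^{G,A}_{H,\Phi}$ is by definition the unique element whose species values are $\delta_{[K,\Psi]_G,[H,\Phi]_G}$. Thus it suffices to call the right-hand side $\epsilon$ and verify that $s^{G,A}_{K,\Psi}(\epsilon)$ equals $1$ when $(K,\Psi)\sim_G(H,\Phi)$ and $0$ otherwise. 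Equivalently, I would establish the formula by transporting the idempotents through the mark morphism, which both explains the shape of the coefficients and guides the verification.

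For the conceptual source, I would work in the ghost ring. Because $\K$ has characteristic zero and contains enough roots of unity, $\rho\colon B^A_{\K}(G)\to\widetilde{B^A_{\K}}(G)$ is an isomorphism onto the $G$-fixed points of $\prod_{L\leq G}\K\mathrm{Hom}(L,A)$, and each factor is the group algebra of the finite abelian group $\mathrm{Hom}(L,A)$. By character orthogonality its primitive idempotents are $f^L_{\Theta}=\frac{1}{|\mathrm{Hom}(L,A)|}\sum_{\phi}\Theta(\phi^{-1})\phi$, indexed by $\Theta\in\mathrm{Hom}(\mathrm{Hom}(L,A),\K^{\times})$; these are permuted by $G$ exactly as the pairs $(L,\Theta)\in\mathcal{D}^A_G$, so the primitive idempotents of $\widetilde{B^A_{\K}}(G)$ are their $G$-orbit sums, and the one indexed by $[H,\Phi]_G$ pulls back along $\rho^{-1}$ to $e^{G,A}_{H,\Phi}$. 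This already accounts for the factors $\frac{1}{|\mathrm{Hom}(H,A)|}$ and $\Phi(\phi^{-1})$ coming from $f^H_{\Phi}$, while $\frac{1}{|N_G(H,\Phi)|}$ records the stabilizer, exactly as $\frac{1}{|N_G(H)|}$ does in the classical formula $e^G_H=\frac{1}{|N_G(H)|}\sum_{K\leq H}|K|\mu(K,H)[G/K]$ for $B(G)$.

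The remaining content is to make $\rho^{-1}$ explicit on the standard basis. This is the fibered analogue of inverting the table of marks: writing $\rho([K,\phi|_K]_G)$ coordinatewise and applying Möbius inversion over the subgroup lattice produces precisely the coefficients $|K|\mu(K,H)$. The condition $\Phi|_{K^{\perp}}=1$ enters because, after the fiberwise orthogonality is applied, only those $K$ for which $\Phi$ descends along $\mathrm{res}^H_K$ survive, so it cuts the subgroup sum down to exactly the right range. Carrying out the direct check, I would substitute the explicit species formula $s^{G,A}_{K,\Psi}([K',\phi|_{K'}]_G)=\sum_{gK',\,K\leq{}^gK'}\Psi(({}^g(\phi|_{K'})|_K)^{-1})$ into $s^{G,A}_{K,\Psi}(\epsilon)$, interchange the orders of summation, and collapse in two stages: first the sum over $\phi\in\mathrm{Hom}(H,A)$ weighted by $\Phi(\phi^{-1})$ by orthogonality of the characters of $\mathrm{Hom}(H,A)$, then the sum over $K\leq H$ weighted by $|K|\mu(K,H)$ by Möbius inversion.

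The main obstacle is the bookkeeping that coordinates these two collapses with the $G$-conjugacy data from the coset sum in the species formula. One must show that the surviving terms force ${}^gK'$ to contain a subgroup $K$ that is $G$-conjugate to $H$, that the distinct conjugates of $\Phi$ under $N_G(H)/H$ appear with the correct multiplicity $|N_G(H,\Phi)|/|H|$ (using that $H$ acts trivially on $\mathrm{Hom}(H,A)$, since $A$ is abelian, so $H\leq N_G(H,\Phi)$), and that these multiplicities combine with $|H|$, $|\mathrm{Hom}(H,A)|$ and the Möbius sum so that the normalization $\tfrac{1}{|N_G(H,\Phi)||\mathrm{Hom}(H,A)|}$ cancels to give exactly $1$ on the diagonal. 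The off-diagonal species $[K,\Psi]_G\neq[H,\Phi]_G$ must be killed either by character orthogonality in the fibers or by the lattice Möbius sum. Getting these multiplicities to match exactly, rather than up to a conjugacy-counting factor, is the delicate point, and it is where both the hypothesis $\Phi|_{K^{\perp}}=1$ and the precise normalization are essential.
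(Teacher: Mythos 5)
A preliminary remark on the comparison: the paper does not prove this statement at all --- it is quoted from Boltje--Y{\i}lmaz \cite{BolYil}, and the Remark that follows it even manipulates equations from that external proof --- so your proposal can only be measured against the standard argument, not an internal one. Your strategy is indeed the standard one, and its conceptual half is correct and efficiently checkable: since the $L$-th coordinate of the mark morphism is $\pi_L\circ\mathrm{res}^G_L$, the species as \emph{defined} in the paper (the composite $\Phi\circ\pi_H\circ\mathrm{res}^G_H$) satisfies $s^{G,A}_{L,\Psi}(x)=\Psi\bigl(\rho(x)_L\bigr)$, and orthogonality of characters of $\mathrm{Hom}(L,A)$ shows in two lines that the $G$-orbit sum of the idempotents $f^L_\Theta=\frac{1}{|\mathrm{Hom}(L,A)|}\sum_{\phi}\Theta(\phi^{-1})\phi$ pulls back along $\rho^{-1}$ to the element with species values $\delta_{[L,\Theta]_G,[H,\Phi]_G}$, i.e.\ to $e^{G,A}_{H,\Phi}$. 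But the proposal stops exactly where the theorem begins: the coefficients $|K|\mu(K,H)$, the normalization $\tfrac{1}{|N_G(H,\Phi)||\mathrm{Hom}(H,A)|}$, and the range of summation must come out of an actual computation --- either an explicit inversion of the triangular mark matrix by M\"obius inversion on the subgroup poset, or a verification of all species values of the right-hand side, in which the off-diagonal vanishing of the M\"obius sum is genuinely nontrivial (for $A=1$ it is the entire content of the classical idempotent formula for $B_{\Q}(G)$). You name these steps, label them ``bookkeeping'', and defer them, so as written no part of the displayed formula is actually established.

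There is also a concrete trap that would derail the verification you propose to carry out. You quote the paper's displayed species formula, which carries an inverse, $s^{G,A}_{L,\Psi}([K',\phi]_G)=\sum\Psi\bigl(({}^g\phi|_L)^{-1}\bigr)$; but that display is inconsistent with the paper's own composite definition of the species, which yields $\Psi({}^g\phi|_L)$ with no inverse. If you pair the displayed (inverted) species formula with the weight $\Phi(\phi^{-1})$ appearing in the idempotent, the diagonal check fails: in $s^{G,A}_{H,\Phi}$ of the right-hand side only terms with $K=H$ and $g\in N_G(H)$ survive, and the inner sum $\sum_{\phi}\Phi(\phi^{-1})\Phi\bigl(({}^g\phi)^{-1}\bigr)$ vanishes by orthogonality unless $\Phi\circ({}^g(-))=\Phi^{-1}$; in particular the identity coset contributes $0$ whenever $\Phi^{2}\neq 1$, so you would compute $0$, not $1$, for most $\Phi$. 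With the consistent pairing (species without the inverse), the surviving cosets are exactly those in $N_G(H,\Phi)/H$ --- here your correct observation that $H$ acts trivially on $\mathrm{Hom}(H,A)$ enters --- and $[N_G(H,\Phi):H]\cdot|H|\cdot|\mathrm{Hom}(H,A)|$ cancels the normalization to give exactly $1$. Fixing this convention is therefore not a cosmetic issue but a prerequisite for your whole argument. Finally, one claim in your last paragraph is wrong: the hypothesis $\Phi|_{K^{\perp}}=1$ is \emph{not} essential for the identity; as the paper's subsequent Remark (Equation 3) shows, the same formula holds when summing over all $K\leq H$, the condition merely discarding null terms.
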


\begin{remark}
    We can drop the condition $\text{ker}\left(\text{res}^G_H\right)\leq \text{ker}(\Phi)$ on the right-hand side of Equation 2, and still have
    \begin{align}
        e^{G,A}_{H,\Phi}=\frac{1}{|N_G(H,\Phi)||\text{Hom}(H,A)|}\sum_{\substack{
        K\leq H,\\
        \phi\in \text{Hom}(H,A)
        }
        }|K|\mu(K,H)\Phi(\phi^{-1})[K,\phi|_K]_G,
    \end{align}
    as we can see if we replace $\text{res}^H_Ke^{H}_{\Psi}$ by $|\text{Hom}(H,A)|^{-1}\sum_{\phi\in \text{Hom}(H,A)}\Phi(\phi^{-1})\phi|_K$ in Equation 9 of \cite[Theorem 3.2]{BolYil} using Equation 15 of the proof. Then Equation 3 is analogous to the formula for a primitive idempotent in $D_{\Q[\zeta]}(G)$ given by Müller in \cite{Muller}, but the additional condition on $K$ discards some null terms.
\end{remark}

We now focus on the case $A=C_n$, a finite cyclic group of order $n$. We can assume that $n$ is a divisor of $\text{exp}(G)$ and take $C_n$ to be the subgroup $(\C^{\times})_n$ of $\C^{\times}$, which will be useful later. Since $H/H^{[n]}$ and $\text{Hom}(\text{Hom}(H,C_n),\C^{\times})$ are naturally isomorphic for each $H\leq G$, where a coset $hH^{[n]}$ corresponds to the evaluation map $ev_h:\text{Hom}(H,C_n) \longrightarrow \C^{\times},\phi\mapsto \phi(h)$, we identify $\DnG$ with the set of pairs $(H,hH^{[n]})$, on which $G$ acts by conjugation too, and we write $\sGnHh$ instead of $s^{G,C_n}_{H,ev_h}$. In this setting, we have the following:

\begin{cor} \thlabel{Species}
There is a bijection
$$G\backslash \mathcal{D}^{C_n}_G \longleftrightarrow \text{Sp}(\BnG)$$
sending $[H,hH^{[n]}]_G$ to the species $\sGnHh$.
\end{cor}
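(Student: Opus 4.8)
The statement is a direct specialization of the Boltje--Y{\i}lmaz theorem to the fiber group $A = C_n$, so the work lies in verifying the hypotheses and in translating the indexing set $\DnG$ into pairs $(H, hH^{[n]})$ in a $G$-equivariant way. First I would check that $C_n$ has finite $\text{exp}(G)$-torsion. Since we have assumed $n$ divides $\text{exp}(G)$, the subgroup $(C_n)_{\text{exp}(G)}$ is all of $C_n$, which is finite of order $n$, so $\text{exp}(A_{\text{exp}(G)}) = n$ and $\C$ (indeed $\Q[\zeta]$ for a primitive $n$-th root of unity $\zeta$) is a splitting field. Thus the hypotheses of the Boltje--Y{\i}lmaz theorem hold, and taking $\K = \C$ already yields a bijection between $G\backslash\DnG$ and $\text{Sp}(\BnG)$ sending $[H,\Phi]_G$ to $s^{G,C_n}_{H,\Phi}$.

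The remaining step is to make the identification of $\DnG$ with the set of pairs $(H, hH^{[n]})$ precise and to check that it intertwines the two $G$-actions. For each $H \leq G$, the natural isomorphism $\text{Hom}(\text{Hom}(H, C_n), \C^{\times}) \cong H/H^{[n]}$ from Section 1 sends the coset $hH^{[n]}$ to the evaluation map $ev_h$. I would then verify $G$-equivariance by direct computation. Writing $c_g \colon H \to {}^gH$ for conjugation by $g$, the $G$-action on the second coordinate of $\DnG$ is precomposition with the map on hom-sets induced by $c_g$, so for $\psi \in \text{Hom}({}^gH, C_n)$ one has $({}^g ev_h)(\psi) = ev_h(\psi \circ c_g) = \psi(ghg^{-1}) = ev_{{}^gh}(\psi)$; that is, ${}^g(ev_h) = ev_{{}^gh}$. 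Since $H^{[n]}$ is characteristic in $H$, this matches the conjugation $(H, hH^{[n]}) \mapsto ({}^gH, {}^gh\,({}^gH)^{[n]})$ on pairs, so the two $G$-actions agree under the identification and their orbit sets correspond bijectively.

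Combining the two steps gives the asserted bijection, the species attached to $(H, hH^{[n]})$ being $s^{G,C_n}_{H, ev_h}$, which is exactly the $\sGnHh$ of the note. The only genuinely delicate point is the equivariance verification: one must confirm that conjugation on the double-dual coordinate of $\DnG$ corresponds to honest conjugation of the group element $h$, and that the assignment $hH^{[n]} \mapsto ev_h$ is independent of the chosen coset representative. Both follow from the naturality of the isomorphism in Section 1 together with the elementary computation above, so I expect only bookkeeping rather than a real obstacle.
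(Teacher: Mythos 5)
Your proposal is correct and takes essentially the same route as the paper: the corollary is read off from the Boltje--Y{\i}lmaz theorem with $\K=\C$, combined with the natural $G$-equivariant identification $H/H^{[n]}\cong \text{Hom}(\text{Hom}(H,C_n),\C^{\times})$, $hH^{[n]}\mapsto ev_h$, which the paper sets up (via Lemma 1.3) in the paragraph immediately preceding the statement. Your explicit checks --- that $(C_n)_{\text{exp}(G)}=C_n$ so the hypotheses hold, and that ${}^g(ev_h)=ev_{{}^gh}$ so the orbit sets match --- are exactly the bookkeeping the paper leaves implicit.
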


Noting that $|\text{Hom}(H,C_n)|=[H:H^{[n]}]$, then from \thref{BolYilIds} we get:

\begin{cor} \thlabel{Idempotents}
    For each $(H,hH^{[n]})$ in $\DnG$, we have that
    \begin{align}
        e^{G,n}_{H,h}&=\frac{|H^{[n]}|}{|N_G(H,hH^{[n]})||H|}\sum_{\substack{
        K\leq H,\\
        \phi\in \text{Hom}(H,C_n)
        }}|K|\mu(K,H)\overline{\phi(h)}[K,\phi|_K]_G \\
        &=\frac{|H^{[n]}|}{|N_G(H,hH^{[n]})||H|}\sum_{\substack{
        K\leq H,\\
        \text{ker}(\text{res}^H_K)\leq \text{ker}(ev_h)
        }}\sum_{\phi\in \text{Hom}(H,C_n)}|K|\mu(K,H)\overline{\phi(h)}[K,\phi|_K]_G
    \end{align}
    in $B^{C_n}_{\Q[\zeta]}(G)$.
\end{cor}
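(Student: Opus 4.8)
The plan is to derive both displayed identities as direct specializations of the general idempotent formula, using the dictionary between $\mathcal{D}^{C_n}_G$ and the set of pairs $(H,hH^{[n]})$ fixed just before the statement, under which $\Phi$ becomes the evaluation map $ev_h$. Three substitutions have to be carried out and checked: the scalar prefactor, the character value $\Phi(\phi^{-1})$, and the orthogonality condition governing the inner sum. Since $C_n=(\C^{\times})_n$, the group $\text{Hom}(H,C_n)$ is identified with $\text{Hom}(H/H^{[n]},\C^{\times})$, so that $|\text{Hom}(H,C_n)|=[H:H^{[n]}]=|H|/|H^{[n]}|$; substituting this into the normalizing constant of \thref{BolYilIds} immediately gives
$$\frac{1}{|N_G(H,\Phi)|\,|\text{Hom}(H,C_n)|}=\frac{|H^{[n]}|}{|N_G(H,hH^{[n]})|\,|H|},$$
which is the prefactor common to both formulae.

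Next I would translate the character values. Under $\Phi=ev_h$ one has $\Phi(\phi^{-1})=ev_h(\phi^{-1})=\phi(h)^{-1}$ for each $\phi\in\text{Hom}(H,C_n)$, and because $C_n$ is realized inside $\C^{\times}$ as the group of $n$-th roots of unity, $\phi(h)$ lies on the unit circle, whence $\phi(h)^{-1}=\overline{\phi(h)}$. Feeding the prefactor and this conjugated value into Equation (3) of the Remark — where the orthogonality condition has already been suppressed — reproduces the first displayed formula verbatim.

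Finally, for the second formula I would instead keep the summation condition of \thref{BolYilIds} and reinterpret it in the cyclic setting. With $K^{\perp}=\text{ker}(\text{res}^H_K)\leq\text{Hom}(H,C_n)$, the requirement $\Phi|_{K^{\perp}}=1$ states that $ev_h(\phi)=\phi(h)=1$ whenever $\phi|_K=1$, which is exactly the inclusion $\text{ker}(\text{res}^H_K)\leq\text{ker}(ev_h)$; substituting the prefactor and the value $\overline{\phi(h)}$ into Equation (2) then yields the second formula. There is no serious obstacle, as the statement is a bookkeeping specialization of results already proved; the only points deserving care are the identity $\phi(h)^{-1}=\overline{\phi(h)}$, which relies on the chosen realization of $C_n$ as roots of unity in $\C^{\times}$, and the reformulation of the orthogonality condition $\Phi|_{K^{\perp}}=1$ as the inclusion of kernels $\text{ker}(\text{res}^H_K)\leq\text{ker}(ev_h)$.
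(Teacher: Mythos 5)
Your proposal is correct and follows exactly the paper's own route: the corollary is obtained by specializing \thref{BolYilIds} (and Equation 3 of the remark) to $A=C_n$ with $\Phi=ev_h$, using $|\text{Hom}(H,C_n)|=[H:H^{[n]}]$ for the prefactor, $ev_h(\phi^{-1})=\phi(h)^{-1}=\overline{\phi(h)}$ for the character values, and the reading of $\Phi|_{K^{\perp}}=1$ as $\text{ker}(\text{res}^H_K)\leq\text{ker}(ev_h)$. The paper compresses all of this into the single remark preceding the corollary; your write-up just makes the three substitutions explicit.
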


The effect of biset operations on primitive idempotents was studied in \cite[\S 4]{BolYil}. For the purposes of this note, we will only need the following cases.

\begin{lemma} \thlabel{indresids}
    Let $H\leq G$ and $h\in H$. Then
    \begin{enumerate}
    \item $\text{res}^G_H\left(\eGnHh\right) = \sum_{\substack{uH^{[n]}\in H/H^{[n]},\\
        [H,uH^{[n]}]_G=[H,hH^{[n]}]_G
        }}e^{H,n}_{H,u}$;
        \item $\text{ind}_H^G\text{res}^G_H\left(\eGnHh\right)=[N_G(H):H]\eGnHh$;
        \item $\text{ind}_H^G\left(e^{H,n}_{H,h}\right)=[N_G(H,hH^{[n]}):H]\eGnHh$.
    \end{enumerate}
\end{lemma}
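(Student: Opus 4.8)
The plan is to prove all three identities by working through the explicit idempotent formula of \thref{Idempotents} together with the standard biset formulae collected at the end of Section 2, exploiting the fact that the species $\sGnHh$ separate idempotents. Since $B^{C_n}_{\Q[\zeta]}(G)\cong \prod_{[K,kK^{[n]}]_G}\Q[\zeta]$ via the map $s^{G,n}$, it suffices to compute the image of each side under every species $s^{G,n}_{K,k}$; two elements of $B^{C_n}_{\Q[\zeta]}(G)$ are equal if and only if all their species values agree. I would therefore carry out each part by evaluating species, using the adjunction between $\text{ind}^G_H$ and $\text{res}^G_H$ and the compatibility of species with restriction.

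For part 1, I would compute $s^{H,n}_{H,u}\bigl(\text{res}^G_H(\eGnHh)\bigr)$ for an arbitrary $(H,uH^{[n]})\in \mathcal{D}^{C_n}_H$. Because $s^{H,n}_{H,u}$ factors through $\pi_H$ after restriction to $H$ (it reads off the coefficient data at the full subgroup $H$), and because restriction of a species of $G$ along $H\leq G$ decomposes as a sum of species of $H$ indexed by the $G$-conjugates of $(H,hH^{[n]})$ that land in $H$, the value $s^{H,n}_{H,u}\bigl(\text{res}^G_H(\eGnHh)\bigr)$ is $1$ precisely when $[H,uH^{[n]}]_G=[H,hH^{[n]}]_G$ and $0$ otherwise. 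This matches the species values of the proposed right-hand side, since the $e^{H,n}_{H,u}$ appearing in the sum are distinct primitive idempotents of $B^{C_n}_{\Q[\zeta]}(H)$ and $s^{H,n}_{H,u}$ detects exactly one of them. The only subtlety is keeping track of the $G$-conjugacy condition versus $H$-conjugacy when indexing the sum, which is where the restriction-of-species formula from Section 3 must be applied carefully.

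For part 2, I would apply $\text{ind}^G_H$ to the identity in part 1 and use $\text{ind}^G_H\bigl(e^{H,n}_{H,u}\bigr)=[N_G(H,uH^{[n]}):H]\,\eGnHu$, which is part 3. Summing over the $G$-conjugacy class, every term $e^{H,n}_{H,u}$ with $[H,uH^{[n]}]_G=[H,hH^{[n]}]_G$ induces to a scalar multiple of the same idempotent $\eGnHh$; the scalars $[N_G(H,uH^{[n]}):H]$ are all equal to $[N_G(H,hH^{[n]}):H]$ along a $G$-orbit, and the number of cosets $uH^{[n]}$ in the orbit is $[N_G(H):H]/[N_G(H,hH^{[n]}):H]$, so the product telescopes to $[N_G(H):H]\,\eGnHh$. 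Thus part 2 follows formally from parts 1 and 3, and I would present it as a corollary of the other two rather than reprove it from scratch.

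Part 3 is the genuine computation and the main obstacle. Here I would again test against species: for $(K,kK^{[n]})\in \mathcal{D}^{C_n}_G$, I compute $s^{G,n}_{K,k}\bigl(\text{ind}^G_H(e^{H,n}_{H,h})\bigr)$. Using the Frobenius/adjunction relation $s^{G,n}_{K,k}\circ \text{ind}^G_H = \sum s^{H,n}_{?}$ summed over representatives coming from the Mackey decomposition of $\text{res}^G_H$ applied to the idempotent-defining species, the value becomes a count of $G$-conjugates of $(K,kK^{[n]})$ that are $H$-conjugate to $(H,hH^{[n]})$. This is nonzero exactly when $[K,kK^{[n]}]_G=[H,hH^{[n]}]_G$, and in that case the count equals the number of such conjugates, namely $[N_G(H,hH^{[n]}):H]$, since $e^{H,n}_{H,h}$ is supported on the single $H$-orbit of $(H,hH^{[n]})$ while induction distributes it over the larger $G$-stabilizer. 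Matching this against the species values of $[N_G(H,hH^{[n]}):H]\eGnHh$ completes the proof. The delicate point throughout is the precise bookkeeping of the stabilizers $N_G(H,hH^{[n]})$ versus $N_G(H)$ and the passage between $H$- and $G$-conjugacy when the Mackey formula is invoked; this is where I would spend the most care, and I would lean on the explicit Mackey expression for transitive bisets from Section 2 if the species adjunction argument needs to be made fully rigorous.
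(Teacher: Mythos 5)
Your proposal is correct in substance, but it takes a genuinely different route from the paper, and the logical dependency among the three parts is reversed. The paper obtains part 1 by citing \cite[Proposition 4.1(b)]{BolYil} (noting $H$ acts trivially on $H/H^{[n]}$), obtains part 2 immediately from the Frobenius identity --- $\text{ind}^G_H\text{res}^G_H$ is multiplication by $[H,1]_G$, which acts on the primitive idempotent $\eGnHh$ by the scalar $s^{G,n}_{H,h}([H,1]_G)=[N_G(H):H]$ --- and then deduces part 3 from parts 1 and 2 by the orbit count $[N_G(H):H]\eGnHh=[N_G(H):N_G(H,hH^{[n]})]\,\text{ind}^G_H\bigl(e^{H,n}_{H,h}\bigr)$. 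You instead prove parts 1 and 3 directly by species separation and derive part 2 from them; your orbit-counting in part 2 ($G$-conjugacy of pairs with first coordinate $H$ is $N_G(H)$-conjugacy, stabilizer indices constant along an orbit) is exactly the same arithmetic the paper uses, just run in the other direction. Your approach buys self-containedness: part 1 needs no external citation, since $s^{H,n}_{K,k}\circ\text{res}^G_H=s^{G,n}_{K,k}$ is immediate from the definition $s^{G,n}_{K,k}=ev_k\circ\pi_K\circ\text{res}^G_K$ (do state it this way --- your phrasing that a species of $G$ ``decomposes as a sum of species of $H$'' under restriction is backwards; what you use is that precomposing a species of $H$ with $\text{res}^G_H$ \emph{is} a species of $G$). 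The cost is that part 3 requires the full Mackey double-coset computation: one must check that the surviving double cosets are those $gH$ with $^g(H,hH^{[n]})=(K,kK^{[n]})$, and that these form $[N_G(H,hH^{[n]}):H]$ distinct double cosets; this is exactly the delicate bookkeeping you flag, and it is the step the paper's route avoids entirely by trading it for the one-line Frobenius argument. Both proofs are valid; the paper's is shorter given the cited result, yours is more explicit and reproves that result along the way.
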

\begin{proof}
    Assertion 1 follows from part b of \cite[Proposition 4.1]{BolYil}, noting that $H$ acts trivially on $H/H^{[n]}$. The composite $\text{ind}^G_H\text{res}^G_H$ is the same as multiplication by $[H,1]_G$, which proves assertion 2. Finally, $\text{ind}^G_H\left(e^{H,n}_{H,u}\right)=\text{ind}^G_H\left(e^{H,n}_{H,h}\right)$ for all $uH^{[n]}$ in the $N_G(H)$-orbit of $hH^{[n]}$, thus
    $$[N_G(H):H]\eGnHh=[N_G(H):N_G(H,hH^{[n]})]\text{ind}^G_H\left(e^{H,n}_{H,h}\right),$$
    and assertion 3 now follows easily.
\end{proof}

\section{Fiber change maps}

% Define the fiber change maps and state their functorial properties

If $f:A\longrightarrow C$ is a homomorphism of abelian groups, then $(H,f\circ \phi)\in\mathcal{M}^C_G$ for all $(H,\phi)\in \MAG$. Moreover, if $(K,\psi)={^g(H,\phi)}$ for some $g\in G$, then $f\circ \psi (k)=f(\phi(k^g))={^g(f\circ \phi)(k)}$ for all $k\in K$, proving that $(K,f\circ \psi)={^g(H,f\circ \phi)}$ in $\mathcal{M}^C_G$.

\begin{deff}
    The \textit{fiber change map induced by $f$} is the application $f_*:\BAG\longrightarrow B^C(G)$ sending $[H,\phi]_G$ to $[H,f\circ \phi]_G$. 
\end{deff}

Fiber change maps are in fact ring homomorphisms, as we prove in the following proposition.

\begin{prop}
    For every homomorphism $f:A\longrightarrow C$ of abelian groups, the fiber change map $f_*:\BAG \longrightarrow B^C(G)$ is a ring homomorphism. Moreover, fixing a finite group $G$, the assignments $A\mapsto \BAG$ and $f\mapsto f_*$ define a functor from the category of abelian groups to the category of unital commutative rings.
\end{prop}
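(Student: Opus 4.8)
The plan is to exploit that $f_*$ is defined on the standard basis and extended linearly, so additivity is immediate; what then remains is to check that $f_*$ preserves the multiplicative identity and is multiplicative on pairs of basis elements, after which the functoriality reduces to verifying compatibility with identities and composition of morphisms.

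For multiplicativity, I would apply $f_*$ to both sides of the product formula
$$[H,\phi]_G[K,\psi]_G=\sum_{g\in [H\backslash G/K]}[H\cap {^gK},\phi|_{H\cap {^gK}}{(^g\psi)}|_{H\cap {^gK}}]_G$$
and compare it with the product $f_*([H,\phi]_G)\,f_*([K,\psi]_G)=[H,f\circ\phi]_G[K,f\circ\psi]_G$ expanded by the same formula. Since the indexing set $[H\backslash G/K]$ depends only on $H$, $K$ and $G$ and not on the fibers, it suffices to match the summands term by term. This reduces to three elementary compatibilities of $f$ with the operations on subcharacters: that $f$ intertwines pointwise multiplication of homomorphisms, namely $f\circ(\alpha\beta)=(f\circ\alpha)(f\circ\beta)$, which holds because $f$ is a homomorphism of abelian groups; that $f$ commutes with restriction, which is immediate; and that $f$ commutes with conjugation, $f\circ{}^g\psi={}^g(f\circ\psi)$, which is exactly the identity already verified in the discussion preceding the definition of $f_*$. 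Combining these gives, for each $g$,
$$f\circ\big(\phi|_{H\cap {^gK}}\,{(^g\psi)}|_{H\cap {^gK}}\big)=(f\circ\phi)|_{H\cap {^gK}}\,{(^g(f\circ\psi))}|_{H\cap {^gK}},$$
so the two expansions agree and $f_*$ is multiplicative.

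For the unit, I would observe that the trivial homomorphism $1\colon G\to A$ satisfies $f\circ 1=1$, since $f$ sends the identity of $A$ to that of $C$; hence $f_*([G,1]_G)=[G,1]_G$ is the multiplicative identity of $B^C(G)$, and $f_*$ is a ring homomorphism.

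Finally, for functoriality I would check directly on basis elements that $(\mathrm{id}_A)_*=\mathrm{id}_{\BAG}$, since $\mathrm{id}_A\circ\phi=\phi$, and that for composable homomorphisms $f\colon A\to C$ and $g\colon C\to D$ one has $(g\circ f)_*=g_*\circ f_*$, since $(g\circ f)\circ\phi=g\circ(f\circ\phi)$ for every subcharacter $(H,\phi)$. Together with the preceding ring-homomorphism statement this yields the desired functor from abelian groups to commutative unital rings. I expect no genuine obstacle here: the whole content is the termwise compatibility of $f$ with restriction, conjugation and pointwise product of characters, and each of these is a formal consequence of $f$ being a group homomorphism, the conjugation case having already been dispatched in the excerpt.
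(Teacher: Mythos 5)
Your proposal is correct and follows essentially the same route as the paper: expand $f_*([H,\phi]_G[K,\psi]_G)$ and $f_*([H,\phi]_G)f_*([K,\psi]_G)$ via the double-coset product formula and match summands using the identity $f\circ\bigl(\phi|_{H\cap {^gK}}\,{(^g\psi)}|_{H\cap {^gK}}\bigr)=(f\circ\phi)|_{H\cap {^gK}}\,{^g(f\circ\psi)}|_{H\cap {^gK}}$, then check the unit and note that functoriality is immediate on basis elements. Your write-up is merely more explicit than the paper's in isolating the three compatibilities (pointwise product, restriction, conjugation) and in spelling out the functor axioms, but the mathematical content is identical.
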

\begin{proof}
    For all subcharacters $(H,\phi),(K,\psi) \in \MAG$, we have
\begin{align*}
    f_*([H,\phi]_G[K,\psi]_G) &=\sum_{g\in [H\backslash G/ K]}[H\cap {^gK},f\circ (\phi|_{H\cap {^gK}} {^g\psi|_{H\cap {^gK}}})]_G\\
    &=\sum_{g\in [H\backslash G/ K]}][H\cap {^gK},(f\circ \phi)|_{H\cap {^gK}} {^g(f\circ \psi)|_{H\cap {^gK}}}]_G\\
    &=f_*([H,\phi]_G)f_*([K,\psi]_G),
\end{align*}
and $f_*([G,1]_G)=[G,1]_G$. The assertion on the functoriality is straightforward.
\end{proof}

\begin{prop}\thlabel{InjectiveFiberChange}
    The fiber change map $f_*:\BAG \longrightarrow B^C(G)$ is injective if and only the restricted map $f:A_{\text{exp}(G)}\longrightarrow C_{\text{exp}(G)}$ is injective.
\end{prop}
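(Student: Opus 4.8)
The plan is to exploit that $f_*$ carries the standard $\Z$-basis $G\backslash\MAG$ to standard basis elements of $B^C(G)$ via $[H,\phi]_G\mapsto[H,f\circ\phi]_G$. A $\Z$-linear map sending basis elements to basis elements is injective precisely when the induced map on the indexing orbit sets is injective, so I would reduce the entire statement to deciding when $[H,\phi]_G\mapsto[H,f\circ\phi]_G$ is injective on $G\backslash\MAG$. The crucial elementary observation I would use throughout is that every $(H,\phi)\in\MAG$ satisfies $\phi(H)\subseteq A_{\text{exp}(H)}\subseteq A_{\text{exp}(G)}$, since $\text{exp}(H)\mid\text{exp}(G)$; thus only the restriction of $f$ to $A_{\text{exp}(G)}$ is ever relevant, which is exactly why the restricted map appears in the statement.

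For the direction ($\Leftarrow$), I would assume $f\colon A_{\text{exp}(G)}\longrightarrow C_{\text{exp}(G)}$ is injective and suppose $[H,f\circ\phi]_G=[K,f\circ\psi]_G$. Choosing $g\in G$ with ${}^gH=K$ and ${}^g(f\circ\phi)=f\circ\psi$, the naturality of conjugation with respect to post-composition by $f$ established just before the definition gives $f\circ({}^g\phi)=f\circ\psi$ as homomorphisms $K\longrightarrow C$. Since both ${}^g\phi$ and $\psi$ take values in $A_{\text{exp}(G)}$, injectivity of $f$ there forces ${}^g\phi=\psi$, so ${}^g(H,\phi)=(K,\psi)$ and the two orbits coincide. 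Hence the orbit map is injective and so is $f_*$.

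For ($\Rightarrow$) I would argue by contraposition, producing an explicit nonzero element of $\ker f_*$ from a failure of injectivity. Pick $a\in A_{\text{exp}(G)}\setminus\{e\}$ with $f(a)=e$, let $o$ be its order, and pick a prime $p\mid o$; then $b:=a^{o/p}$ has order $p$, lies in $\ker f$, and (being in $A_{\text{exp}(G)}$) forces $p\mid\text{exp}(G)$. The main obstacle, and the reason to descend to a prime, is that one cannot in general realize the full cyclic group $C_o$ as a quotient of a subgroup of $G$ (for instance $C_6$ is not a quotient of any subgroup of $S_3$, even though $6\mid\text{exp}(S_3)$). After descending to $p$, however, $p\mid\text{exp}(G)$ guarantees an element $x\in G$ with $p\mid\text{ord}(x)$, so the cyclic group $H:=\langle x\rangle$ admits a surjection onto $C_p\cong\langle b\rangle$, yielding a homomorphism $\phi\colon H\longrightarrow A$ with image $\langle b\rangle$ and $f\circ\phi=1$.

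Finally I would verify that $[H,\phi]_G\neq[H,1]_G$ while $f_*[H,\phi]_G=[H,1]_G=f_*[H,1]_G$: every $G$-conjugate of $(H,1)$ has trivial second coordinate, so the nontrivial $\phi$ prevents $(H,\phi)$ from being $G$-conjugate to $(H,1)$. Consequently $[H,\phi]_G-[H,1]_G$ is a nonzero element of $\ker f_*$, so $f_*$ is not injective, which completes the contrapositive and the proof.
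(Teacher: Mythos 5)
Your proposal is correct and follows essentially the same route as the paper's proof: the forward direction is the identical orbit-level argument (conjugating, post-composing with $f$, and using injectivity of $f$ on $A_{\text{exp}(G)}$, where all subcharacters take their values), and the converse produces the same counterexample pair $[H,\phi]_G\neq[H,1]_G$ with $f\circ\phi=1$ supported on a cyclic subgroup. The only cosmetic difference is that the paper takes an element of order exactly $p$ (via Cauchy) while you take any cyclic subgroup of order divisible by $p$ and surject it onto $C_p\cong\langle b\rangle$; both implementations are equally valid.
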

\begin{proof}
    Suppose that $f$ is injective on $A_{\text{exp}(G)}$. If $f_*([H,\phi]_G)=f_*([K,\psi]_G)$ for some basis elements $[H,\phi]_G$ and $[K,\psi]_G$, then there is some $g\in G$ such that $f\circ \psi={^g(f\circ \phi)}=f\circ {^g\phi}$, hence $\psi={^g\phi}$ and $[H,\phi]_G=[K,\psi]_G$ in $\BAG$, which implies that $f_*$ is injective. Conversely, suppose that there is some $e\neq a\in \text{ker}(f)$ such that $a^{\text{exp}(G)}=e$. We can assume that $a$ is of prime order $p$ by replacing it for an adequate power. Then there is an element $e\neq g\in G$ of order $p$, and we define a morphism $\phi:\langle g\rangle \longrightarrow A$ sending $g$ to $a$. Then $[\langle g\rangle,\phi]_G\neq [\langle g\rangle,1]_G$ in $\BAG$, but $f_*([\langle g\rangle,\phi]_G)=[\langle g\rangle,1]_G=f_*([\langle g\rangle,1]_G)$ in $B^C(G)$, proving that $f_*$ is not injective.
\end{proof}

Recall that if $\text{exp}(G)=p^am$ for a prime $p$ with $a\geq 1$, and $p$ does not divide $m$, then there is an element of order $p^a$ in $G$. This is because $x^{p^a}=e$ for each $p$-element $x$, and $y^m=e$ for each $p'$-element $y$. If $p^b$ is the order of a $p$-element of maximum order, then $g^{p^bm}=e$ for all $g\in G$, hence $a=b$.

\begin{prop}\thlabel{IsoFiberChange}
    Let $f:A\longrightarrow C$ be a homomorphism of abelian groups which is injective on $A_{\text{exp}(G)}$. Then $f_*:\BAG\longrightarrow B^C(G)$ is an isomorphism if and only if $f:A_{\text{exp}(G)}\longrightarrow C_{\text{exp}(G)}$ is an isomorphism.
\end{prop}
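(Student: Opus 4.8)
The plan is to reduce the whole statement to the combinatorics of the monomial bases. Since $f$ is injective on $A_{\text{exp}(G)}$, \thref{InjectiveFiberChange} gives that $f_*$ is injective. As $f_*$ carries the monomial basis of $\BAG$ into the monomial basis of $B^C(G)$ via $[H,\phi]_G\mapsto[H,f\circ\phi]_G$, and an injective linear map sending a basis into a basis is onto precisely when the induced map of bases $G\backslash\MAG\to G\backslash\mathcal{M}^C_G$ is onto, the whole claim reduces to showing that this basis map is surjective if and only if $f\colon A_{\text{exp}(G)}\to C_{\text{exp}(G)}$ is onto (which, together with the standing injectivity, is the same as being an isomorphism).

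Next I would describe the image of the basis map. A basis element $[K,\psi]_G$ is hit exactly when $\psi=f\circ\phi$ for some $\phi\in\text{Hom}(K,A)$; the conjugation ambiguity causes no trouble, since $^g(f\circ\phi)=f\circ{}^g\phi$ as observed before the definition of $f_*$. Because $\text{Im}(\psi)\subseteq C_{\text{exp}(K)}\subseteq C_{\text{exp}(G)}$ and $f$ is injective on $A_{\text{exp}(G)}$, such a lift $\phi$ exists if and only if $\text{Im}(\psi)\subseteq\text{Im}\!\left(f|_{A_{\text{exp}(G)}}\right)$, in which case $\phi=(f|_{A_{\text{exp}(G)}})^{-1}\circ\psi$ is its unique value. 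Hence the basis map is surjective if and only if $\text{Im}(\psi)\subseteq\text{Im}(f|_{A_{\text{exp}(G)}})$ for every $K\le G$ and every $\psi\in\text{Hom}(K,C)$.

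The crux, and the step carrying the real content, is to show that these character images generate all of $C_{\text{exp}(G)}$, so that containing every one of them forces containing $C_{\text{exp}(G)}$. One direction is free: since $\text{Im}(f|_{A_{\text{exp}(G)}})\subseteq C_{\text{exp}(G)}$ always holds, the displayed containment holds for every $\psi$ as soon as $f|_{A_{\text{exp}(G)}}$ is onto $C_{\text{exp}(G)}$. For the converse I would invoke the fact recalled just before the statement: if $p^a$ is the exact power of $p$ dividing $\text{exp}(G)$, then $G$ has an element $g$ of order $p^a$. Taking $K=\langle g\rangle$ and letting $\psi$ run through $\text{Hom}(\langle g\rangle,C)$ realizes as images $\text{Im}(\psi)$ every element of $C$ of order dividing $p^a$, that is, the full $p$-primary component of $C_{\text{exp}(G)}$. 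Letting $p$ range over the primes dividing $\text{exp}(G)$, and using that $C_{\text{exp}(G)}$ is the internal direct sum of its $p$-primary components, these components generate $C_{\text{exp}(G)}$. Thus, if every $\text{Im}(\psi)$ lies in the subgroup $\text{Im}(f|_{A_{\text{exp}(G)}})$, that subgroup contains $C_{\text{exp}(G)}$ and therefore equals it.

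Combining the steps, the basis map is surjective if and only if $f|_{A_{\text{exp}(G)}}$ is onto $C_{\text{exp}(G)}$, which together with the injectivity assumed throughout is exactly the assertion that $f\colon A_{\text{exp}(G)}\to C_{\text{exp}(G)}$ is an isomorphism. I expect the only delicate point to be the generation argument; everything else is a direct unwinding of the definitions of $f_*$ and of the monomial basis.
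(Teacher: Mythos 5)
Your proposal is correct and follows essentially the same route as the paper's proof: injectivity comes from \thref{InjectiveFiberChange}, surjectivity is reduced to hitting every basis element $[K,\psi]_G$, and the decisive construction in both arguments is a character on a cyclic subgroup $\langle g\rangle$ of $p$-power order realizing a $p$-element of $C_{\text{exp}(G)}$, with the primary decomposition of $C_{\text{exp}(G)}$ underlying the reduction to $p$-elements. The only difference is presentational: you characterize the image of the basis map directly, while the paper argues contrapositively by exhibiting a single missed basis element, leaving the primary-decomposition step implicit.
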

\begin{proof}
    If $f|_{A_{\text{exp}(G)}}$ is not surjective onto $C_{\text{exp}(G)}$, then there is a prime divisor $p$ of $\text{exp}(G)$ and a $p$-element $e\neq c \in C_{\text{exp}(G)}$ of order $p^b\leq p^a=\text{exp}(G)_p$ that does not belong to $\text{Im}(f)$. Taking an element $e\neq g\in G$ of order $p^b$, we define a morphism $\phi:\langle g\rangle \longrightarrow C$, $g\mapsto c$. Then the element $[\langle g\rangle, \phi]_G$ of $B^C(G)$ does not belong to the image of $f_*$. The converse follows easily using the inverse of $f|_{A_{\text{exp}(G)}}$.
\end{proof}

In the following, we are going to determine to what extent the fiber change maps commute with biset operations. It is enough to consider the case of transitive bisets, so let $E\leq H\times G$ and $(K,\phi)\in \mathcal{M}_G^{A}$, then
\begin{align}
  f_*\left(B^A\left(H\times G/E\right)\left([K,\phi]_G\right) \right)&= \sum_{\substack{g\in [p_2(E)\backslash G/K],\\
k_2(E)^g \cap K\leq \text{ker}(\phi)}} [L_g,f\circ \psi_g]_H
\end{align}
and
\begin{align}
  B^C\left(H\times G/E\right)\left(f_*\left([K, \phi]_G\right)\right)&= \sum_{\substack{g\in [p_2(E)\backslash G/K],\\
k_2(E)^g \cap K\leq \text{ker}(f\circ \phi)}} [T_g,\theta_g]_H,
\end{align}
where $T_g=p_1(E\star {^{(g,1)}\Gamma_{f\circ \phi}(K)})\leq H$ and $\theta_g:T_g\longrightarrow C$ is defined as in Equation 1. We want to compare the right-hand sides of both equations. Fixing a set $[p_2(E)\backslash G/K]$, it is clear that if $g\in [p_2(E)\backslash G/K]$ is such that $k_2(E)^g\cap K\leq \text{ker}(\phi)$, then $k_2(E)^g\cap K\leq \text{ker}(f\circ \phi)$. For any $(U,\eta)\in \MAG$, we have that 
$$p_1(E\star \Gamma_{\eta}(U))=\left\{h\in H\;\middle|\;\exists u\in U\;\text{s.t.}\;(h,u)\in E\right\}$$ 
and so $p_1(E\star \Gamma_{\eta}(U))$ only depends on $E$ and $U$. Since $^{(g,1)}\Gamma_{\phi}(K)=\Gamma_{^g\phi}(^gK)$, we have $L_g=T_g$ for all $g$ such that $k_2(E)^g\cap K\leq \text{ker}(\phi)$, and moreover, if $x\in T_g=L_g$, then
$$\theta_g(x)=(f\circ \phi)(y^g)=f(\phi(y^g)))=f(\psi_g(x))$$ 
for some $y\in {^gK}$ such that $(x,y)\in E$, hence $\theta_g=f\circ \psi_g$. 

\begin{theorem} \thlabel{fiberchange1}
Let $f:A\longrightarrow C$ be a homomorphism of abelian groups. Then the fiber change maps $f_{*}:B^{A}(G) \longrightarrow B^{C}(G)$ for all finite groups define a morphism of biset functors if and only if $f$ is injective on $\text{tor}(A)$.
\end{theorem}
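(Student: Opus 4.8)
The plan is to reduce the statement, exactly as in the computation preceding it, to a comparison of the two index sets in Equations 6 and 7, and then to pin down the single biset operation responsible for any failure. Since a morphism of biset functors is by definition a natural transformation, and since the transitive bisets $H\times G/E$ generate $B(H,G)$ while all the maps in sight are additive, it suffices to verify $f_*\circ B^A(H\times G/E)=B^C(H\times G/E)\circ f_*$ on each standard basis element $[K,\phi]_G$, for all finite groups $H,G$, all $E\leq H\times G$, and all $(K,\phi)\in\MAG$. This is precisely the assertion that the right-hand sides of Equations 6 and 7 agree.

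The discussion before the statement already establishes $L_g=T_g$ and $\theta_g=f\circ\psi_g$ for every $g$ in the summation range of Equation 6. Hence the two sums agree term by term over the common index set $\{g : k_2(E)^g\cap K\leq \text{ker}(\phi)\}$, and the only possible discrepancy comes from the a priori larger index set of Equation 7, namely $\{g : k_2(E)^g\cap K\leq \text{ker}(f\circ\phi)\}$. Since the standard basis elements of $B^C(H)$ are $\Z$-linearly independent and the extra summands $[T_g,\theta_g]_H$ all occur with positive coefficients, their difference vanishes if and only if these two index sets coincide. Thus $f_*$ is a morphism of biset functors if and only if, for every $E$ and every $(K,\phi)$, the two index sets are equal.

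For the ``if'' direction I would show that injectivity of $f$ on $\text{tor}(A)$ forces the index sets to coincide. Fix $g$ with $k_2(E)^g\cap K\leq \text{ker}(f\circ\phi)$ and take any $x$ in this subgroup; then $x\in K\leq G$ has finite order dividing $\text{exp}(G)$, so $\phi(x)\in A_{\text{exp}(G)}\subseteq\text{tor}(A)$, while $f(\phi(x))=e$. Injectivity of $f$ on $\text{tor}(A)$ yields $\phi(x)=e$, i.e. $x\in\text{ker}(\phi)$, so the defining condition of Equation 6 already holds and the index sets agree; hence $f_*$ commutes with every transitive biset and defines a morphism of biset functors.

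For the ``only if'' direction I would argue by contraposition, and the main obstacle—really the key observation—is locating where the counterexample can live: induction, restriction, inflation and isomorphism all correspond to bisets with $k_2(E)=\{e\}$, so they never separate the two index sets (consistent with fiber change always being a morphism of inflation functors), and the only operation with $k_2(E)\neq\{e\}$ is deflation. So suppose $f$ is not injective on $\text{tor}(A)$, choose $e\neq a\in\text{ker}(f)\cap\text{tor}(A)$, and, after replacing it by a suitable power, assume $a$ has prime order $p$. Take $G=C_p=\langle g\rangle$, $N=G$, the deflation $\text{def}^G_{1}$, and $\phi\colon C_p\to A$ with $g\mapsto a$. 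Applying the explicit deflation formula, $\text{ker}(\phi)=\{e\}$ gives $K\cap N=C_p\not\leq\text{ker}(\phi)$, so $f_*\bigl(\text{def}^G_{1}([C_p,\phi]_G)\bigr)=0$, whereas $f\circ\phi=1$ gives $K\cap N\leq\text{ker}(f\circ\phi)$, so $\text{def}^G_{1}\bigl(f_*([C_p,\phi]_G)\bigr)=[1,1]_1\neq 0$. This discrepancy shows $f_*$ is not a morphism of biset functors, completing the equivalence.
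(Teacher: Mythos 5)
Your proof is correct and follows essentially the same route as the paper's: both reduce naturality to the comparison of the index sets in Equations 6 and 7 for transitive bisets, prove the ``if'' direction by noting that injectivity of $f$ on $\text{tor}(A)$ forces $\ker(f\circ\phi)=\ker(\phi)$ (your pointwise argument via $\phi(x)\in A_{\text{exp}(G)}$ is the same reasoning), and prove the ``only if'' direction by the same deflation counterexample $\text{def}^{C_n}_1$ applied to $[C_n,\phi]_{C_n}$ with $\phi$ sending a generator to a torsion element of $\ker(f)$ (you merely normalize to prime order, which the paper does not bother to do here). Your added remarks --- that differing index sets cannot cancel because all coefficients are positive, and that deflation is the only basic operation with $k_2(E)\neq\{e\}$ --- are sound refinements of the same argument rather than a different approach.
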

\begin{proof}
If $f$ is injective on $\text{tor}(A)$, then $\text{ker}(f\circ \phi)=\text{ker}(\phi)$ for all $(K,\phi)\in\MAG$, hence the right-hand sides of Equations 6 and 7 are equal for all $E\leq H\times G$ and all finite groups $H$ and $G$. Conversely, if $f$ is not injective on $\text{tor}(A)$, then there is a non-trivial element $a\in A$ of finite order $n$ such that $f(a)=1$. Let $C_n=\langle x\rangle$ be a cyclic group of order $n$ and take the morphism $\phi:C_n\longrightarrow A$ sending $x$ to $a$. Taking the deflation from $C_n$ to the trivial group, we get $\text{def}^{C_n}_1([C_n,\phi]_{C_n})= 0$, hence $f_*(\text{def}^{C_n}_1([C_n,\phi]_{C_n}))=0$ in $B^C(1)$. On the other hand, $f\circ\phi=1$ is the trivial morphism and so $f_*([C_n,\phi]_{C_n})=[C_n,1]_{C_n}\in B^C(C_n)$, then $\text{def}^{C_n}_1([C_n,1]_{C_n})=[1,1]_1$ in $B^C(1)$. Hence, $f_*\circ B^A(\text{def}^{C_n}_1)\neq B^C(\text{def}^{C_n}_1)\circ f_*$ on $B^A(C_n)$. 
\end{proof}

From the proof we see that when $f$ is not injective on $\text{tor}(A)$, there is a fiber change map that does not commute with a deflation. However, if we consider $B^A$ as an \textit{inflation functor}, that is, an additive functor over the subcategory $\mathcal{C}^{\triangleright}$ of $\mathcal{C}$ having all finite groups as objects and hom-sets generated by the classes of right-free bisets, then the fiber change maps always define a natural transformation (see \cite[Ex. 3.2.5]{Bouc}). 

\begin{theorem}
    Let $f:A\longrightarrow C$ be a morphism of abelian groups. Then the fiber change maps $f_*:\BAG\longrightarrow B^C(G)$ for all finite groups define a morphism of inflation functors.
\end{theorem}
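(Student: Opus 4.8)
The plan is to reuse the comparison of Equations 6 and 7 carried out immediately before \thref{fiberchange1}, and to observe that the single obstruction found there evaporates once we restrict to right-free bisets. Recall that the hom-sets of $\mathcal{C}^{\triangleright}$ are generated by the classes of right-free transitive bisets, equivalently by induction, restriction, isomorphism and inflation; deflation, which supplied the counterexample in \thref{fiberchange1}, is exactly the elementary operation that is \emph{not} right-free. Since $f_*$ is additive and both $B^A$ and $B^C$ are functors, it suffices to verify that $f_*$ commutes with $B^A(H\times G/E)$ on each standard basis element $[K,\phi]_G$, for every transitive biset $H\times G/E$ with $E\leq H\times G$ right-free, i.e. with $k_2(E)=\{e\}$.

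First I would invoke the analysis preceding \thref{fiberchange1}: for an arbitrary transitive biset it was shown that $L_g=T_g$ and $\theta_g=f\circ\psi_g$ on all common indices, so that the right-hand sides of Equations 6 and 7 differ \emph{only} through their indexing conditions, namely $k_2(E)^g\cap K\leq \ker(\phi)$ for Equation 6 against $k_2(E)^g\cap K\leq \ker(f\circ\phi)$ for Equation 7. As $\ker(\phi)\leq \ker(f\circ\phi)$, the second index set is a priori larger, and this is precisely the gap that forces the injectivity hypothesis in \thref{fiberchange1}.

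The key step is that right-freeness closes this gap unconditionally. Indeed, $k_2(E)=\{e\}$ gives $k_2(E)^g\cap K=\{e\}$ for every $g$, and $\{e\}$ lies in both $\ker(\phi)$ and $\ker(f\circ\phi)$; hence both indexing conditions hold for all $g\in[p_2(E)\backslash G/K]$, the two sums range over the same set of double cosets, and by $L_g=T_g$, $\theta_g=f\circ\psi_g$ they have identical summands. Thus $f_*$ commutes with every right-free transitive biset operation, and by additivity and functoriality with every morphism of $\mathcal{C}^{\triangleright}$, which is exactly the assertion that the family $(f_*)$ is a morphism of inflation functors. I do not anticipate a real obstacle: the entire difficulty was already localized in the kernel comparison before \thref{fiberchange1}, and the hypothesis $k_2(E)=\{e\}$ trivializes the very indexing condition whose failure broke naturality for deflation.
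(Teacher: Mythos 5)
Your proposal is correct and follows essentially the same route as the paper's proof: restrict to transitive right-free bisets (those with $k_2(E)=\{e\}$), note that this makes the indexing conditions $k_2(E)^g\cap K\leq \ker(\phi)$ and $k_2(E)^g\cap K\leq \ker(f\circ\phi)$ in Equations 6 and 7 trivially and simultaneously satisfied, and invoke the already-established identities $L_g=T_g$ and $\theta_g=f\circ\psi_g$ to conclude that the two sides agree. No gap to report.
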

\begin{proof}
     It is known that a transitive biset $H\times G/E$ is right-free if and only if $k_2(E)=\{e\}$, so let $E\leq H\times G$ be such that $k_2(E)=\{e\}$. Considering Equations 6 and 7, the condition $k_2(E)^g\cap K \leq \text{ker}(\phi)$ is trivially satisfied for all $(K,\phi)\in \MAG$ and all $g \in [p_2(E)\backslash G/K]$. Since $T_g=L_g$ and $\theta_g=f\circ \psi_g$ for all $g$, then the right-hand sides of both equations are equal.
\end{proof}

For each $H\leq G$, the pushout map $f_*:\text{Hom}(H,A)\longrightarrow \text{Hom}(H,C)$, $\phi\mapsto f\circ \phi$ induces a group homomorphism $\text{Hom}(\text{Hom}(H,C),\K^{\times})\longrightarrow \text{Hom}(\text{Hom}(H,A),\K^{\times})$, $\Psi\mapsto \Psi\circ f_*$ by precomposition. The collection of these maps now gives a map from $\mathcal{D}^C_G$ to $\mathcal{D}^A_G$ which is compatible with the $G$-actions.

\begin{prop}
    Let $f:A\longrightarrow C$ be a homormophism of abelian groups with finite $\text{exp}(G)$-torsion. Then $s^{G,C}_{H,\Psi}\circ f_*=s^{G,A}_{H,\Psi\circ f_*}$ for $(H,\Psi)\in \mathcal{D}^C_G$, and
        $$f_*\left(e^{G,A}_{H,\Phi}\right)=\sum_{\substack{
        [H,\Psi]_G\in G\backslash \mathcal{D}^C_G,\\
        [H,\Phi]_G= [H,\Psi\circ f_*]_G\in G\backslash\mathcal{D}^A_G
        }}e^{G,C}_{H,\Psi}$$
    for $(H,\Phi)\in \mathcal{D}^A_G$.
\end{prop}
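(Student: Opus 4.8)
The plan is to verify the two claimed identities in turn, starting with the species compatibility, since the idempotent formula should follow from it by a duality-of-bases argument.

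First I would prove $s^{G,C}_{H,\Psi}\circ f_*=s^{G,A}_{H,\Psi\circ f_*}$ by a direct computation on the standard basis. Applying $f_*$ to a basis element $[K,\phi]_G\in\BAG$ gives $[K,f\circ\phi]_G\in B^C(G)$, and then I would feed this into the explicit species formula recalled just before the Boltje--Y{\i}lmaz theorem, namely
$$s^{G,C}_{H,\Psi}\left([K,f\circ\phi]_G\right)=\sum_{\substack{gK\in G/K,\\ H\leq\,^{g}K}}\Psi\left(({}^{g}(f\circ\phi)|_H)^{-1}\right).$$
The key observation is that ${}^{g}(f\circ\phi)=f\circ{}^{g}\phi$ (this is exactly the compatibility noted at the very start of Section~4), so ${}^{g}(f\circ\phi)|_H=f\circ({}^{g}\phi|_H)=f_*({}^{g}\phi|_H)$. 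Substituting and using the definition of precomposition, $\Psi(f_*(-))=(\Psi\circ f_*)(-)$, turns the sum into exactly the formula for $s^{G,A}_{H,\Psi\circ f_*}\left([K,\phi]_G\right)$. Since both sides are $\Z$-linear in the basis, the identity holds on all of $\BAG$.

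For the idempotent formula I would argue by evaluating species. The elements $e^{G,C}_{H,\Psi}$ for $[H,\Psi]_G\in G\backslash\mathcal{D}^C_G$ form the complete set of primitive idempotents of $B^C_{\K}(G)$, and by Boltje--Y{\i}lmaz an element of $B^C_{\K}(G)$ is determined by the values all species take on it. So it suffices to check that $s^{G,C}_{K,\Psi'}$ agrees on both sides for every $(K,\Psi')\in\mathcal{D}^C_G$. On the left, $s^{G,C}_{K,\Psi'}\left(f_*(e^{G,A}_{H,\Phi})\right)=s^{G,A}_{K,\Psi'\circ f_*}\left(e^{G,A}_{H,\Phi}\right)$ by the first part, and this equals $1$ precisely when $[K,\Psi'\circ f_*]_G=[H,\Phi]_G$ in $G\backslash\mathcal{D}^A_G$ (forcing $K$ conjugate to $H$) and $0$ otherwise, by the defining property of $e^{G,A}_{H,\Phi}$. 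On the right, $s^{G,C}_{K,\Psi'}$ applied to the sum picks out the single term indexed by $[K,\Psi']_G$ when it appears, i.e.\ when $[H,\Psi'\circ f_*]_G=[H,\Phi]_G$, giving $1$, and $0$ otherwise. These two conditions coincide, so the species values match and the two sides are equal.

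The main obstacle I anticipate is bookkeeping rather than conceptual: I must be careful that the indexing set on the right-hand side is well defined, i.e.\ that the condition $[H,\Phi]_G=[H,\Psi\circ f_*]_G$ is invariant under replacing $(H,\Psi)$ by a $G$-conjugate, and that a single $G$-orbit $[K,\Psi']_G$ is counted once. This is where the compatibility of the map $\mathcal{D}^C_G\to\mathcal{D}^A_G$ with the $G$-actions (established in the paragraph preceding the statement) does the real work, ensuring $\Psi\mapsto\Psi\circ f_*$ descends to orbits. I would also note that the subgroup appearing in both idempotents is forced to be the \emph{same} $H$ (up to conjugacy) because $s^{G,A}_{K,-}(e^{G,A}_{H,\Phi})$ already vanishes unless $K$ is $G$-conjugate to $H$, which is why the outer index in the sum ranges only over pairs of the form $(H,\Psi)$.
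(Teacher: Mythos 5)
Your proof is correct and follows essentially the same route as the paper: a direct computation on the standard basis (using ${}^g(f\circ\phi)=f\circ{}^g\phi$) for the species identity, and then deducing the idempotent formula by evaluating all species $s^{G,C}_{K,\Psi'}$ on $f_*\left(e^{G,A}_{H,\Phi}\right)$ and using that the product of species is an isomorphism. Your write-up merely makes explicit the orbit-bookkeeping that the paper compresses into its final sentence.
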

\begin{proof}
   For $(L,\psi)\in \MAG$, we have
\begin{align*}
    s^{G,C}_{H,\Phi}\circ f_* \left([L,\psi]_G\right) & = s^{G,C}_{H,\Phi}\left([L,f\circ \psi]_G\right)\\
    & = \sum_{\substack{gL\in G/L,\\
    H\leq {^gL}}}\Phi\left({^g\left(f\circ \psi\right)|_H}\right)\\
    & = \sum_{\substack{gL\in G/L,\\
    H\leq {^gL}}}\left(\Phi\circ f_*\right)\left({^g \psi|_H}\right)\\
    & = s^{G,A}_{H,\Phi\circ f_*}\left([L,\psi]_G\right),
\end{align*}
and the last identity now follows from $s^{G,C}_{H,\Psi}\left(f_*\left(e^{G,A}_{H,\Phi}\right)\right)=s^{G,A}_{H,\Psi\circ f_*}\left(e^{G,A}_{H,\Phi}\right)$.
\end{proof}

For a divisor $n$ of $\text{exp}(G)$ and a divisor $t$ of $n$, let $i_{t,n}:C_t\hookrightarrow C_n$ be the natural inclusion as subgroups of $\C^{\times}$, and $\pi_{n,t}:C_n\longrightarrow C_t$, $\zeta\mapsto \zeta^{n/t}$. The map 
$$(i_{t,n})_*:B^{C_t}(G)\longrightarrow \BnG$$ 
is a quite simple injection, mapping $[H,\phi]_G\mapsto[H,\phi]_G$ for $(H,\phi)\in \mathcal{M}^{C_t}_G\subseteq \MnG$; on the other hand,
$$(\pi_{n,t})_*:\BnG\longrightarrow B^{C_t}(G)$$
sends $[K,\phi]_G$ to $[K,\phi^{n/t}]_G$ for $(K,\phi)\in \MnG$, and it is in general not surjective: if $G=C_{p^2}\times C_p=\langle x,y\rangle$ with $o(x)=p^2$ and $o(y)=p$, $n=p^2$ and $t=p$, then taking the map $\eta:G\longrightarrow C_p=\langle \zeta\rangle$ given by $x\mapsto \zeta$, $y\mapsto \zeta$, the element $[G,\eta]_G$ in $B^{C_p}(G)$ is not in the image of $(\pi_{p^2,p})_*$. However, if $(n/t,t)=1$, then $\zeta_t\mapsto \zeta_t^{n/t}$ is an automorphism of $C_t$, hence $(\pi_{n,t})_*$ is surjective. By the previous proposition,
$$\sGnHh\circ (i_{t,n})_*=s^{G,t}_{H,h},$$ 
and 
$$s^{G,t}_{H,h}\circ (\pi_{n,t})_*=s^{G,n}_{H,h^{n/t}}$$
for $H\leq G$ and $h\in H$.

\begin{cor}
    Let $t$ be a divisor of $n$. Then
    \begin{enumerate}
        \item $(i_{t,n})_*\left(e^{G,t}_{H,h}\right)=\sum_{\substack{
        [K,kK^{[n]}]_G\in G\backslash \mathcal{D}^{C_n}_G,\\
        [K,kK^{[t]}]_G=[H,hH^{[t]}]_G
        }}e^{G,n}_{K,k}$;
        \item $(\pi_{n,t})_*\left(\eGnHh\right)=\sum_{\substack{
        [K,kK^{[t]}]_G\in G\backslash \mathcal{D}^{C_t}_G,\\
        [K,k^{n/t}K^{[n]}]_G=[H,hH^{[n]}]_G
        }}e^{G,t}_{K,k}$;
        \item $(\pi_{n,1})_*\left(\eGnHh\right)=\begin{cases}
            e^{G,1}_{H,1} &\text{if } h\in H^{[n]},\\
            0 &\text{otherwise.}
        \end{cases}$
    \end{enumerate}
\end{cor}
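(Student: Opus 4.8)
The plan is to deduce all three identities directly from the preceding proposition, which expresses $f_*\!\left(e^{G,A}_{H,\Phi}\right)$ as the sum of the primitive idempotents whose class in $G\backslash\mathcal{D}^C_G$ lies in the fiber over $[H,\Phi]_G$ of the induced map $G\backslash\mathcal{D}^C_G\to G\backslash\mathcal{D}^A_G$, $[K,\Psi]_G\mapsto[K,\Psi\circ f_*]_G$; since this map preserves the subgroup, the summation subgroup $K$ is automatically $G$-conjugate to $H$. The only task is then to compute the precomposition $\Psi\mapsto\Psi\circ f_*$ under the identification $\mathcal{D}^{C_m}_G\cong\{(K,kK^{[m]})\}$, where $kK^{[m]}$ corresponds to the evaluation $ev_k\colon\text{Hom}(K,C_m)\to\C^{\times}$, $\phi\mapsto\phi(k)$.

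For part 1 I would apply the proposition with $A=C_t$, $C=C_n$ and $f=i_{t,n}$. Writing $\Psi=ev_k$ on the $C_n$-side and using that $i_{t,n}$ is the inclusion in $\C^{\times}$, one computes $\left(ev_k\circ(i_{t,n})_*\right)(\phi)=ev_k(i_{t,n}\circ\phi)=\phi(k)$ for $\phi\in\text{Hom}(K,C_t)$, so $ev_k\circ(i_{t,n})_*=ev_k$ on the $C_t$-side; equivalently, the induced map $\mathcal{D}^{C_n}_G\to\mathcal{D}^{C_t}_G$ is the coset reduction $[K,kK^{[n]}]_G\mapsto[K,kK^{[t]}]_G$, and feeding this into the proposition gives the stated sum. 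Part 2 is symmetric: taking $A=C_n$, $C=C_t$ and $f=\pi_{n,t}$, and using $\pi_{n,t}(\zeta)=\zeta^{n/t}$ together with the homomorphism identity $\phi(k)^{n/t}=\phi(k^{n/t})$, one gets $\left(ev_k\circ(\pi_{n,t})_*\right)(\phi)=\phi(k)^{n/t}=ev_{k^{n/t}}(\phi)$, so the induced map $\mathcal{D}^{C_t}_G\to\mathcal{D}^{C_n}_G$ sends $[K,kK^{[t]}]_G\mapsto[K,k^{n/t}K^{[n]}]_G$; the proposition then yields exactly the fiber condition recorded in part 2.

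For part 3 I would specialize part 2 to $t=1$. Here $C_1$ is trivial, so $\text{Hom}(K,C_1)=1$, $K^{[1]}=K$ by the first part of \thref{lemma1}, and every coset $kK^{[1]}$ collapses to the single class $K$; thus $\mathcal{D}^{C_1}_G$ is indexed by conjugacy classes of subgroups and $e^{G,1}_{K,k}$ is the Burnside idempotent $e^{G,1}_{K,1}$. The step requiring care is computing the induced map at $t=1$: since $(\pi_{n,1})_*$ sends every $\phi\in\text{Hom}(K,C_n)$ to the trivial homomorphism, the composite $ev_k\circ(\pi_{n,1})_*$ is the constant map $1$, which corresponds to the identity coset $K^{[n]}$ rather than to some $k^nK^{[n]}$ depending on a now-undetermined representative $k$; hence $[K,K]_G\mapsto[K,eK^{[n]}]_G$. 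The fiber condition $[K,eK^{[n]}]_G=[H,hH^{[n]}]_G$ then forces $K$ to be $G$-conjugate to $H$ and the coset $hH^{[n]}$ to be trivial, that is $h\in H^{[n]}$, in which case the sum contributes the single term $e^{G,1}_{H,1}$ and is otherwise empty. This last translation—verifying that the degenerate evaluation lands on the identity coset $K^{[n]}$—is the only genuine, if modest, obstacle in the argument.
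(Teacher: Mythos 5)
Your proposal is correct and follows essentially the same route as the paper: the corollary is an immediate consequence of the preceding proposition on $f_*\bigl(e^{G,A}_{H,\Phi}\bigr)$, combined with the identification of the induced maps on $\mathcal{D}$-sets, which the paper records as the species identities $\sGnHh\circ (i_{t,n})_*=s^{G,t}_{H,h}$ and $s^{G,t}_{H,h}\circ (\pi_{n,t})_*=s^{G,n}_{H,h^{n/t}}$ and you recover by computing $ev_k\circ f_*$ directly. Your care in part 3 about the degenerate evaluation is fine but not an obstacle, since $k^nK^{[n]}=K^{[n]}$ for every $k\in K$, so the $t=1$ case of part 2 specializes without ambiguity.
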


\section{The conductors of primitive idempotents for cyclic fiber groups}

Let $A$ be an abelian group with finite ${\text{exp}(G)}$-torsion. Let $\K$ be a field of characteristic zero containing a root $\zeta$ of $1$ of order $\exp(A_{\text{exp}(G)})$, and let $\mathcal{O}\subseteq \K$ be its subring of algebraic integers. For $(H,\Phi)\in \mathcal{D}^A_G$, the \textit{conductor} of the primitive idempotent $e^{G,A}_{H,\Phi}$ over $\mathcal{O}$ is the minimum positive integer $t$ such that $te^{G,A}_{H,\Phi}$ lies in $B^A_{\mathcal{O}}(G)$, denoted by $c^{G,A}_{H,\Phi}$. Conductors exist and do not depend on $\K$ and $\mathcal{O}$ because the coefficients of $e^{G,A}_{H,\Phi}$ with respect to the monomial basis all lie in $\Q[\zeta]$ and $\mathcal{O}\cap \Q[\zeta]=\Z[\zeta]$. Since integers $t$ with the property that $te^{G,A}_{H,\Phi}$ belongs to $B^A_{\mathcal{O}}(G)$ form  an ideal in $\Z$, then any $t$ with that property must be divisible by $c^{G,A}_{H,\Phi}$.

In the following, we will consider $A=C_n$, and think of $\BnG$ as a subring of $D(G)$ for the natural inclusion, and write $\cGnHh$ for the conductor of $\eGnHh$. In this setting, we are going to prove the following assertion.

\begin{theorem} \thlabel{GRThm}
    Let $G$ be a finite group, and let $n$ be a positive divisor of $\text{exp}(G)$ such that $(n,\text{exp}(G)/n)=1$. Then for $(H,hH^{[n]})\in \DnG$, the conductor of the primitive idempotent $\eGnHh$ in $B^{C_n}_{\Q[\zeta]}(G)$ is
    \begin{align}
        c^{G,n}_{H,h}=[N_G(H,hH^{[n]}):H^{[n]}][H^{[n]}:H']_0.
    \end{align}
\end{theorem}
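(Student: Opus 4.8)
The plan is to prove the two divisibilities between $\cGnHh$ and the quantity $m:=[N_G(H,hH^{[n]}):H^{[n]}][H^{[n]}:H']_0$. The divisibility $\cGnHh\mid m$ is the general upper bound established earlier in this section and uses no hypothesis on $n$, so the whole force of the theorem lies in the reverse divisibility $m\mid\cGnHh$, and the coprimality $(n,\text{exp}(G)/n)=1$ will be used only there. I would work throughout from the explicit expansion of $\eGnHh$ in the monomial basis provided by \thref{Idempotents}, abbreviating $N:=N_G(H,hH^{[n]})$, and reduce the condition $t\,\eGnHh\in\BnGz$ to a prime-by-prime control of the $p$-adic valuations of the standard coefficients of $\eGnHh$: the coefficient of a basis element $[K,\psi]_G$ with $K\leq H$ is the sum, over the pairs $(K'',\phi)$ with $K''\leq H$ conjugate to $K$ and $[K'',\phi|_{K''}]_G=[K,\psi]_G$, of the terms $\frac{|H^{[n]}|}{|N||H|}|K''|\mu(K'',H)\overline{\phi(h)}$, so that $\cGnHh$ is the least common denominator of these coefficients.

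The factor $[N:H^{[n]}]$ comes out with no cancellation and without the hypothesis. A basis element $[H,\psi]_G$ can only be fed by the summands with $K=H$, and $[H,\psi]_G=[H,1]_G$ forces $\psi$ to be $N_G(H)$-conjugate to the trivial character, hence $\psi=1$; so the coefficient of $[H,1]_G$ is the single term $\frac{|H^{[n]}|}{|N||H|}|H|\mu(H,H)=\frac{1}{[N:H^{[n]}]}$. This already gives $[N:H^{[n]}]\mid\cGnHh$.

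For the square-free factor I would first use \thref{lemma1} to see that, under coprimality, every prime $p$ dividing $[H^{[n]}:H']=|(H/H')^{[n]}|$ is coprime to $n$ and divides $|H/H'|$, and moreover $p\nmid[H:H^{[n]}]$. Fixing such a $p$, choose a normal subgroup $K\trianglelefteq H$ of index $p$, which exists because $H/H'$ surjects onto $C_p$. Since $p\nmid n$ the group $\text{Hom}(H/K,C_n)=\text{Hom}(C_p,C_n)$ is trivial, so $K^{\perp}=1$, the restriction $\text{Hom}(H,C_n)\to\text{Hom}(K,C_n)$ is injective, and, as characters of $H$ are $H$-invariant on the normal subgroup $K$, the coefficient of $[K,\phi|_K]_H$ reduces to the single term $\frac{|H^{[n]}|}{|H|^2}|K|\mu(K,H)\overline{\phi(h)}=\pm\frac{1}{p\,[H:H^{[n]}]}\overline{\phi(h)}$, whose $p$-denominator is exactly $p$. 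Running over all such primes $p$ yields $[H:H^{[n]}][H^{[n]}:H']_0\mid c^{H,n}_{H,h}$, i.e. the theorem in the case $H=G$; the general case I would reach through the induction--restriction identities of \thref{indresids}, which tie $\cGnHh$ to $c^{H,n}_{H,h}$ and are meant to contribute the missing factor $[N:H]$.

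The hard part will be the non-cancellation certification, and this is exactly where the coprimality is indispensable. Two cancellations threaten the square-free factor: the character-orthogonality sum $\sum_{\phi}\overline{\phi(h)}$ over characters with a common restriction (this is precisely what annihilates the analogue of the $K$-coefficient when $p\mid n$, as one already sees for $G=C_{p^2}$, $n=p$, where the $C_p$-coefficient vanishes and the true conductor is a proper divisor of $m$), and the collapse of several $G$-conjugates of $K$, with conjugate characters, onto a single orbit $[K,\psi]_G$. The first is neutralized by $K^{\perp}=1$, forced by $p\nmid n$; the second is what makes the passage to $H=G$ --- where $K$ is its own unique conjugate and the computation becomes a transparent orthogonality calculation in the finite abelian group $H/H'$ --- the natural device. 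I expect the delicate point to be verifying that this reduction is compatible with conductors and preserves the relevant $p$-adic valuations, so that the extra factor of $p$ genuinely survives in $\cGnHh$ rather than being absorbed by the $C_n$-character sums.
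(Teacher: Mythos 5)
Much of your outline matches the paper's actual argument for the hard direction (its \thref{GRProp1}): the extraction of the factor $[N:H^{[n]}]$, where $N:=N_G(H,hH^{[n]})$, from the coefficient of $[H,1]_G$; the observation that $(n,\text{exp}(G)/n)=1$ makes $[H:H^{[n]}]$ coprime to $[H^{[n]}:H']$ and kills $\text{ker}(\text{res}^H_K)$ on $\text{Hom}(H,C_n)$ for $[H:K]=p$; and your $C_{p^2}$ example correctly explains why coprimality is indispensable. (Note, though, that the divisibility $\cGnHh\mid [N:H^{[n]}][H^{[n]}:H']_0$ that you quote as ``established earlier'' is itself half of the paper's proof --- \thref{GRProp2}, proved via \thref{BolCong}, \thref{MullerLemma} and \thref{Rota} --- so a self-contained proof must supply it as well.) The genuine gap is exactly the step you flag as delicate: the passage from $c^{H,n}_{H,h}$ to $\cGnHh$ through \thref{indresids} cannot deliver the lower bound, and the paper does not argue that way. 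Restriction is structurally blind past one factor of $p$: $\text{res}^G_H(\eGnHh)=\sum_u e^{H,n}_{H,u}$, and every coefficient of every $e^{H,n}_{H,u}$ has denominator dividing $[H:H^{[n]}][H^{[n]}:H']_0$, whose $p$-part is exactly $p$; hence integrality of $t\,\text{res}^G_H(\eGnHh)$ can never certify more than $p\mid t$, while you need $p\,[N:H^{[n]}]\mid t$, which is strictly stronger whenever $p\mid [N:H]$. This situation occurs already for $B(C_p\times C_p)$ with $H$ of order $p$ and $n=1$: the conductor is $p^2$, but the $[H,1]_G$-coefficient and restriction each give only $p\mid t$. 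Induction fares no better as a transfer device: writing $t\,\eGnHh=\frac{t}{[N:H]}\,\text{ind}^G_H\left(e^{H,n}_{H,h}\right)$, the coefficient of $[K,1]_G$ aggregates the coefficients of $[L,1]_H$ over \emph{all} $L\leq H$ that are $G$-conjugate to $K$, and equals $-t\,\#[K]/\left(p\,[N:H^{[n]}]\right)$, where $\#[K]$ is the number of such $L$. This forces the missing factor $p$ into $t$ only if $p\nmid\#[K]$, and nothing in your proposal produces a $K$ with that property; your normal $K\unlhd H$ need not have it, since its $G$-class can have size divisible by $p$.

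The missing idea is the paper's counting argument, which works directly in $B^{C_n}_{\Q[\zeta]}(G)$ and ranges over \emph{all} index-$p$ subgroups, not just the normal ones. Let $\mathcal{S}_p=\{K\leq H\mid [H:K]=p\}$. The normal members correspond to index-$p$ subgroups of $H/H'$, so their number is $\equiv 1 \pmod p$, and the non-normal members fall into $H$-classes of size $p$; hence $|\mathcal{S}_p|\equiv 1\pmod p$. Since $\mathcal{S}_p$ is a disjoint union of $G$-conjugacy classes, some class $[K]$ has size prime to $p$. Crucially, this $K$ need not be normal in $H$, and one cannot run the same pigeonhole over normal subgroups alone: the set of normal members of $\mathcal{S}_p$ is not a union of $G$-classes (a single class can mix normal and non-normal subgroups of $H$), so the congruence for their count gives no conclusion about their class sizes --- this is precisely why your restriction to $K\unlhd H$, which is harmless when $H=G$, is a real loss in general. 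With such a $K$ in hand, coprimality gives $(\text{res}^H_K)^{-1}(1)=\{1\}$ as in your argument, the coefficient of $[K,1]_G$ in $[N:H^{[n]}]\,\eGnHh$ is exactly $-\#[K]/p$ with no further collapsing possible, and integrality forces the extra factor $p$. This single computation proves the lower bound for arbitrary $(H,hH^{[n]})\in\DnG$ with no reduction to the case $H=G$ at all; your reduction step, by contrast, is not merely delicate but unbridgeable without this counting lemma.
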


These formulae generalize those already known for the extreme cases $n=1$ and $n=\text{exp}(G)$ that correspond to the case of the Burnside ring and the monomial representation ring respectively:  for $H\leq G$,
$$c^{G,1}_{H,1}=[N_G(H,1H^{[1]})):H^{[1]}][H^{[1]}:H']_0=[N_G(H):H][H:H']_0$$ 
since $H^{[1]}=H$, as stated in \cite{DressVall}, \cite{HaIsOz} and \cite{Nicolson}, and 
$$c^{G,\text{exp}(G)}_{H,h}=[N_G(H,hH^{[\text{exp}(G)]}):H^{[\text{exp}(G)]}][H^{[\text{exp}(G)]}:H']_0=[N_G(H,hH'):H']$$ 
for $(H,hH')\in \mathcal{D}_G$ since $H^{[\text{exp}(H)]}=H'$, as proved by Müller in \cite[Theorem 3.5]{Muller}. We are going to prove \thref{GRThm} by showing that the left and right numbers in Equation 8 divide each other. To prove that $[N_G(H,hH^{[n]}):H^{[n]}][H^{[n]}:H']_0$ divides $c^{G,n}_{H,h}$, we will proceed as in Müller's proof. We will make use of Boltje's integrality criterion \cite[Theorem 2.2]{Bol1} which will allow us to decide whether an element of $\widetilde{B^A_R}(G)$ belongs to $\rho(B^A_R(G))$. We state it here for $A=C_n$ and $R=\Z[\zeta]$.

\begin{prop}[Boltje]\thlabel{BolCong}
    Let 
    $$x=\left(\sum_{\phi\in \text{Hom}(H,C_n)}a_{(H,\phi)}\phi\right)_{H\leq G}$$ 
    be an element in $\gBnGz$. Then $x\in \rho\left(\BnGz\right)$ if and only if the congruence
        \begin{align}
            \sum_{(H,\phi)\leq (K,\psi)\in \mathcal{M}^{C_n}_{N_G(H,\phi)}}\mu(H,K)a_{(K,\psi)}\; \equiv \; 0 \; mod\;[N_G(H,\phi):H]
        \end{align}
        holds for all $(H,\phi)\in \MnG$, where $\mu$ stands for the Möbius function of the poset of subgroups of $G$.
\end{prop}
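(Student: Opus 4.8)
The plan is to characterize the image $\rho(\BnGz)$ inside the integral ghost ring $\gBnGz$ by reducing both implications to a single combinatorial computation: the value of the left-hand side of the congruence on a standard basis element $\rho([K_0,\psi_0]_G)$. For a pair $(H,\phi)\in\MnG$ and $x=\left(\sum_{\phi}a_{(H,\phi)}\phi\right)_{H\le G}\in\gBnGz$, abbreviate the left-hand side of the congruence in \thref{BolCong} by
$$\lambda_{(H,\phi)}(x):=\sum_{\substack{(H,\phi)\le(K,\psi),\\ K\le N_G(H,\phi)}}\mu(H,K)\,a_{(K,\psi)},$$
a $\Z[\zeta]$-linear functional which is well defined on $G$-orbits since $a$ is $G$-invariant. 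As $\Q[\zeta]$ has characteristic zero, $\rho$ becomes an isomorphism after extending scalars, so each $x\in\gBnGz$ has a unique preimage $\rho^{-1}(x)=\sum_{[K_0,\psi_0]_G}c_{[K_0,\psi_0]}[K_0,\psi_0]_G$ with $c_{[K_0,\psi_0]}\in\Q[\zeta]$, and $x\in\rho(\BnGz)$ if and only if every $c_{[K_0,\psi_0]}$ lies in $\Z[\zeta]$. The entire statement will follow once $\lambda_{(H,\phi)}$ is understood on the standard basis.

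First I would evaluate $\lambda_{(H,\phi)}\big(\rho([K_0,\psi_0]_G)\big)$. The coefficient of $\psi$ in the $K$-coordinate of $\rho([K_0,\psi_0]_G)$ is $\gamma^G_{(K,\psi),(K_0,\psi_0)}=\#\{gK_0\in G/K_0\mid K\le{}^gK_0,\ {}^g\psi_0|_K=\psi\}$. Substituting and interchanging summations, for each coset $gK_0$ the character $\psi={}^g\psi_0|_K$ is forced by $K$, the constraint $\psi|_H=\phi$ becomes ${}^g\psi_0|_H=\phi$, and the inner sum collapses by the defining recurrence of the Möbius function of the subgroup lattice:
$$\sum_{H\le K\le N_G(H,\phi)\cap{}^gK_0}\mu(H,K)=\delta_{H,\,N_G(H,\phi)\cap{}^gK_0}.$$
Hence
$$\lambda_{(H,\phi)}\big(\rho([K_0,\psi_0]_G)\big)=\#\{gK_0\in G/K_0\mid{}^g\psi_0|_H=\phi,\ N_G(H,\phi)\cap{}^gK_0=H\}=:|S|.$$

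The crux is a free action argument. Letting $N_G(H,\phi)$ act on $G/K_0$ by left multiplication, and using that it normalizes both $H$ and $\phi$, one checks this action preserves $S$; the stabilizer of a point $gK_0\in S$ is $N_G(H,\phi)\cap{}^gK_0=H$, so every orbit on $S$ has size $[N_G(H,\phi):H]$ and therefore $|S|=[N_G(H,\phi):H]\,m_{(H,\phi),[K_0,\psi_0]}$ for a non-negative integer $m$. A short count shows $S\ne\emptyset$ only if $[H,\phi]_G\le[K_0,\psi_0]_G$ in the subconjugacy order, and that $m=1$ exactly when $[H,\phi]_G=[K_0,\psi_0]_G$ (then $|K_0|=|H|$ forces ${}^gK_0=H$ and $S$ is a single orbit). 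These facts give both directions at once. Necessity follows by linearity: $\lambda_{(H,\phi)}(y)\equiv 0\ \mathrm{mod}\ [N_G(H,\phi):H]$ for every $y\in\rho(\BnGz)$. For sufficiency, writing
$$\lambda_{(H,\phi)}(x)=[N_G(H,\phi):H]\Big(c_{[H,\phi]}+\sum_{[K_0,\psi_0]>[H,\phi]}m_{(H,\phi),[K_0,\psi_0]}\,c_{[K_0,\psi_0]}\Big),$$
the congruence is equivalent to the parenthesized sum lying in $\Z[\zeta]$; a descending induction on the subconjugacy order (base case: the maximal pairs, where the sum is empty) then forces every $c_{[H,\phi]}\in\Z[\zeta]$, i.e.\ $x\in\rho(\BnGz)$.

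The main obstacle is the free action argument together with the diagonal identification $m=1$: everything hinges on the Möbius sum collapsing precisely to the condition $N_G(H,\phi)\cap{}^gK_0=H$, which is exactly what pins the point stabilizers to $H$ and makes the index $[N_G(H,\phi):H]$ emerge. The triangularity needed to run the induction is a byproduct of the same count, so no separate argument is required.
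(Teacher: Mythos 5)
The paper does not actually prove this proposition: it is imported verbatim as Boltje's integrality criterion \cite[Theorem 2.2]{Bol1}, specialized to $A=C_n$ and $R=\Z[\zeta]$, so there is no internal proof to compare against. Judged on its own, your argument is correct and complete, and it is the natural direct proof. The key computation $\lambda_{(H,\phi)}\bigl(\rho([K_0,\psi_0]_G)\bigr)=|S|$ with $S=\bigl\{gK_0\in G/K_0 \;\big|\; {}^g\psi_0|_H=\phi,\ N_G(H,\phi)\cap{}^gK_0=H\bigr\}$ checks out: for fixed $gK_0$ the character ${}^g\psi_0|_K$ restricts to ${}^g\psi_0|_H$ independently of $K$, so the indicator factors out and the Möbius sum over $H\leq K\leq N_G(H,\phi)\cap{}^gK_0$ collapses to the delta, exactly as you say; the left $N_G(H,\phi)$-action on $S$ does preserve $S$ and has every point stabilizer equal to $N_G(H,\phi)\cap{}^gK_0=H$; and the resulting matrix of values is supported on pairs with $[H,\phi]_G\leq[K_0,\psi_0]_G$ with diagonal entries exactly $[N_G(H,\phi):H]$, which is all that the two implications (necessity by linearity, sufficiency by descending induction over the finite poset of orbit classes, dividing by the integer $[N_G(H,\phi):H]$ inside the torsion-free ring $\Q[\zeta]$) require. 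Compared with Boltje's cited proof, which establishes the criterion for ghost rings of arbitrary generalized Burnside rings within his canonical induction formalism, your proof trades that generality for a short, self-contained lattice computation in the case at hand; the underlying mechanism (Möbius collapse plus action with constant stabilizer $H$) is the same one driving the general result. One small inaccuracy worth fixing in the write-up: the claim that $m=1$ \emph{exactly} when $[H,\phi]_G=[K_0,\psi_0]_G$ fails in the ``only if'' direction --- already for $G=S_3$, $n=1$, $H=\langle (12)\rangle$, $\phi=1$ and $(K_0,\psi_0)=(G,1)$ one finds $|S|=1=[N_G(H):H]$, so $m=1$ strictly above the diagonal --- but your induction only uses $m\in\Z_{\geq 0}$, the diagonal value $m=1$, and the triangular support, so nothing in the argument breaks.
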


We will also make use of the following generalized version of \cite[Lemma 3.3]{Muller}.

\begin{lemma}\thlabel{MullerLemma}
    Let $H\leq G$ and consider the map $res^G_H:\text{Hom}(G,C_n)\longrightarrow \text{Hom}(H,C_n)$, $\phi\mapsto \phi|_H$. Then
    \begin{enumerate}
        \item $\text{Im}(\text{res}^G_H)\cong HG^{[n]}/G^{[n]}$;
        \item $|(\text{res}^G_H)^{-1}(\psi)|=[G:HG^{[n]}]$ for $\psi\in \text{Im}(\text{res}^G_H)$;
        \item For $g\in G$,
        $$\sum_{\phi\in (\text{res}^G_H)^{-1}(\psi)}\phi(g)=\begin{cases}
            [G:HG^{[n]}]\psi(g) &\text{if }g\in HG^{[n]},\\
            0 &\text{else.}
        \end{cases}$$
    \end{enumerate}
\end{lemma}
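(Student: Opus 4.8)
The plan is to prove the three assertions by directly analyzing the restriction map $\mathrm{res}^G_H:\mathrm{Hom}(G,C_n)\to\mathrm{Hom}(H,C_n)$ as a group homomorphism, since each $\mathrm{Hom}(-,C_n)$ is a finite abelian group (pointwise multiplication) and the results are essentially statements about its kernel, image, and the character-sum behaviour of the fibers. The key structural input is Lemma 1.3, which gives a natural isomorphism $\mathrm{Hom}(G,C_n)\cong G/G^{[n]}$, compatible with restriction to subgroups.

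\emph{Assertion 1.} I would first identify the image. Using $\mathrm{Hom}(G,C_n)\cong\mathrm{Hom}(G/G^{[n]},\C^{\times})$ and the analogous statement for $H$, restriction of characters corresponds under Pontryagin duality to the transpose of the map induced on quotients. Concretely, a homomorphism $H\to C_n$ extends to $G$ along $H\hookrightarrow G$ iff it factors through $H/(H\cap G^{[n]})$; but every $\phi\in\mathrm{Hom}(H,C_n)$ already factors through $H/H^{[n]}$, and since $H^{[n]}\le G^{[n]}$ (Lemma 1.1(3)) we have $H\cap G^{[n]}\supseteq H^{[n]}$. The characters of $H$ that lie in $\mathrm{Im}(\mathrm{res}^G_H)$ are precisely those that kill $H\cap G^{[n]}$, i.e.\ those factoring through $H/(H\cap G^{[n]})\cong HG^{[n]}/G^{[n]}$. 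So $\mathrm{Im}(\mathrm{res}^G_H)\cong\mathrm{Hom}(HG^{[n]}/G^{[n]},\C^{\times})\cong HG^{[n]}/G^{[n]}$, the last isomorphism by Pontryagin duality for the finite abelian group $HG^{[n]}/G^{[n]}$.

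\emph{Assertions 2 and 3.} The fiber size in assertion 2 is then $|\ker(\mathrm{res}^G_H)| = |\mathrm{Hom}(G,C_n)|/|\mathrm{Im}(\mathrm{res}^G_H)| = [G:G^{[n]}]/[HG^{[n]}:G^{[n]}] = [G:HG^{[n]}]$, using $\mathrm{Hom}(G,C_n)\cong G/G^{[n]}$ again. Since $\mathrm{res}^G_H$ is a group homomorphism, all nonempty fibers are cosets of $\ker(\mathrm{res}^G_H)$ and hence have this same cardinality, giving assertion 2. For assertion 3, fixing $\psi\in\mathrm{Im}(\mathrm{res}^G_H)$ and writing $(\mathrm{res}^G_H)^{-1}(\psi)=\phi_0\ker(\mathrm{res}^G_H)$ for some preimage $\phi_0$, the sum factors as $\phi_0(g)\sum_{\chi\in\ker(\mathrm{res}^G_H)}\chi(g)$. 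The kernel $\ker(\mathrm{res}^G_H)$ consists of those characters of $G$ that are trivial on $H$, which under duality is exactly the character group of $G/HG^{[n]}$ evaluated at the image of $g$; by the standard orthogonality relation for the finite abelian group $G/HG^{[n]}$, this inner sum equals $[G:HG^{[n]}]$ if $g\in HG^{[n]}$ (so $g$ maps to the identity and every such character takes value $1$) and $0$ otherwise. Choosing $\phi_0$ to be $\psi$ itself wherever defined—more precisely replacing $\phi_0(g)$ by $\psi(g)$ when $g\in HG^{[n]}$, which is legitimate since any two preimages agree on $H$ and $g\in HG^{[n]}$ allows comparison via the factorization—yields the stated formula.

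The main obstacle is bookkeeping in assertion 3: one must justify cleanly that $\phi_0(g)$ may be replaced by $\psi(g)$ when $g\in HG^{[n]}$. This requires observing that on $HG^{[n]}$ the value of any preimage $\phi$ of $\psi$ is determined by $\psi$ (since $\phi|_H=\psi$ determines $\phi$ on $H$, and $\phi$ is trivial on $G^{[n]}\supseteq$ the relevant part), so the factor $\phi_0(g)$ can be pulled out as $\psi$ extended across $HG^{[n]}/G^{[n]}$. Once the duality dictionary of Lemma 1.3 is set up, assertions 1 and 2 are immediate, and the only real content is the orthogonality computation underlying assertion 3.
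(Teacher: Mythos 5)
Your proof is correct, but it takes a more self-contained route than the paper. The paper's proof is essentially a two-line reduction: it sets up a commutative diagram identifying $\mathrm{res}^G_H$, via the natural isomorphisms $\mathrm{Hom}(G,C_n)\cong\mathrm{Hom}(G/G^{[n]},\C^{\times})$ and $\mathrm{Hom}(H,C_n)\cong\mathrm{Hom}(H/H^{[n]},\C^{\times})$, with the restriction map $\mathrm{Hom}(G/G^{[n]},\C^{\times})\longrightarrow\mathrm{Hom}(HG^{[n]}/G^{[n]},\C^{\times})$ followed by the injective dual of the projection $H/H^{[n]}\twoheadrightarrow HG^{[n]}/G^{[n]}$, and then quotes all three assertions from M\"uller's Lemma 3.3 (cited as \cite[Lemma 3.3]{Muller}) applied to the subgroup $HG^{[n]}/G^{[n]}$ of the finite abelian group $G/G^{[n]}$. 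You carry out the same underlying reduction (everything factors through $G/G^{[n]}$), but instead of citing M\"uller you re-prove the abelian content directly: the image is identified as the characters killing $H\cap G^{[n]}$, the fibers are cosets of the kernel, and assertion 3 follows from column orthogonality for the group $G/HG^{[n]}$ (normal in $G$ since $G'\leq G^{[n]}\leq HG^{[n]}$). Two things are worth flagging. First, the ``if'' direction of your extension criterion in assertion 1 silently uses surjectivity of character restriction for finite abelian groups (divisibility of $\C^{\times}$); this is standard but is the one step you only gesture at via ``transpose.'' Second, your handling of assertion 3 is a genuine improvement in explicitness over both M\"uller's statement and the paper's citation: you identify and resolve the subtlety that $\psi(g)$ for $g\in HG^{[n]}\setminus H$ must mean the canonical extension of $\psi$ to $HG^{[n]}$, and that every preimage $\phi_0$ of $\psi$ agrees with this extension there, which is exactly what legitimizes pulling out $\phi_0(g)$ as $\psi(g)$. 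In short: the paper's route buys brevity by outsourcing to the literature; yours buys a self-contained argument at the cost of having to invoke (and ideally state) the standard duality facts yourself.
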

\begin{proof}
    We have a commutative diagram
    $$\xymatrix{
    \text{Hom}(G,C_n)\ar@{<->}[d]\ar[rrrr]^{\text{res}^G_H} &&&&
    \text{Hom}(H,C_n)\ar@{<->}[d]\\
    \text{Hom}(G/G^{[n]},\C^{\times})\ar[rr]_{\text{res}^{G/G^{[n]}}_{HG^{[n]}/G^{[n]}}} &&\text{Hom}(HG^{[n]}/G^{[n]},\C^{\times})\ar@{^{(}->}[rr] &&\text{Hom}(H/H^{[n]},\C^{\times})
    }$$
    where the vertical arrows are the natural isomorphisms and the bottom-right arrow is the dual of the projection of $H/H^{[n]}$ onto $HG^{[n]}/G^{[n]}$. Hence, $\text{Im}\left(\text{res}^G_H\right)\cong \text{Im}\left(\text{res}^{G/G^{[n]}}_{HG^{[n]}/G^{[n]}}\right)$, and all of the assertions follow applying \cite[Lemma 3.3]{Muller} for $HG^{[n]}/G^{[n]}\leq G/G^{[n]}$.
\end{proof}

We recall a classic result by Hawkes, Isaacs and Özaydin that can be found in \cite{HaIsOz} as Theorem 4.5.

\begin{lemma}\thlabel{Rota}
    Let $H\leq G$. Then $[N_G(H):H]$ divides $\mu(H,G)[G:HG']_0$.
\end{lemma}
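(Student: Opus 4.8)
The statement is a divisibility of integers, so the natural first move is to reduce it to a prime-by-prime comparison of $p$-adic valuations. Write $v_p$ for the $p$-adic valuation. Since $[G:HG']_0$ is square-free, $v_p([G:HG']_0)=1$ exactly when $p\mid[G:HG']$ and is $0$ otherwise; moreover if $\mu(H,G)=0$ the claim is vacuous, so I may assume $\mu(H,G)\neq 0$. It therefore suffices to prove, for each prime $p$, that
$$v_p\big([N_G(H):H]\big)\;\le\; v_p\big(\mu(H,G)\big)+\begin{cases} 1, & p\mid[G:HG'],\\ 0, & \text{otherwise.}\end{cases}$$
In particular the real content is a \emph{lower} bound on the $p$-part of the Möbius number $\mu(H,G)$ in terms of the $p$-part of the Weyl group $[N_G(H):H]$.

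The plan is then to bring in the topological description of the Möbius number: by Philip Hall's theorem, $\mu(H,G)$ is the reduced Euler characteristic of the order complex of the open interval $(H,G)=\{K\mid H<K<G\}$ in the subgroup lattice. The group $N_G(H)$ acts on this poset by conjugation, since it normalizes $H$ and hence permutes the subgroups containing $H$; so does any $p$-subgroup $R\le N_G(H)$. I would choose $R$ to be a Sylow $p$-subgroup of the preimage $P$ in $N_G(H)$ of a Sylow $p$-subgroup of $N_G(H)/H$. As $H\unlhd P$, the intersection $R\cap H$ is a Sylow $p$-subgroup of $H$, whence $[R:R\cap H]=p^{v_p([N_G(H):H])}$ and $RH/H$ is the full Sylow $p$-subgroup of the Weyl group. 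The standard fixed-point congruence for a $p$-group acting on a finite complex then gives $\mu(H,G)\equiv\tilde\chi\big((H,G)^R\big)\pmod p$, where $(H,G)^R$ is the subposet of $R$-normalized intermediate subgroups; this supplies the base case of the estimate.

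The heart of the argument is to upgrade this single mod-$p$ congruence to the exact valuation bound above, and to locate the source of the square-free correction. Here I would exploit that, because $\mu(H,G)\neq 0$, Crapo's theorem forces $H$ to be an intersection of maximal subgroups of $G$, which rigidifies the top of $(H,G)$. The primes $p$ dividing $[G:HG']=|G^{\mathrm{ab}}/\mathrm{im}(H)|$ are precisely those admitting a normal subgroup $M\unlhd G$ of index $p$ containing $H$, i.e.\ a normal coatom of $[H,G]$; such a coatom is automatically $R$-fixed, and it is exactly this abelian coatom that allows $\tilde\chi\big((H,G)^R\big)$ to absorb one extra factor of $p$ beyond what $\mu(H,G)$ itself provides. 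When no such $M$ exists ($p\nmid[G:HG']$) the fixed subposet carries no abelian coatom, and one must instead extract the full power $p^{v_p([N_G(H):H])}$ out of $\mu(H,G)$, by analysing the $R$-orbits on chains of $(H,G)$ through their $p$-subgroup stabilizers together with the induced action on $G^{\mathrm{ab}}$.

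The main obstacle I anticipate is precisely this last step. A single $p$-group action only delivers a congruence \emph{modulo $p$}, whereas the statement demands the full power $p^{v_p([N_G(H):H])}$ (or one less when $p\mid[G:HG']$), so the crux is to bootstrap from the fixed-point congruence to the sharp valuation. This requires controlling not merely the fixed points of $R$ but the entire orbit structure on the order complex, and tying the resulting ``defect'' exactly to the $p$-rank of $G^{\mathrm{ab}}/\mathrm{im}(H)$ so that only a single, square-free factor of $p$ survives. A secondary subtlety to keep in mind is that $H$ does not act trivially on $(H,G)$, so the Weyl group $N_G(H)/H$ is not literally acting; one must therefore work throughout with the genuine $p$-subgroup $R$ and the index $[R:R\cap H]$ rather than with $N_G(H)/H$ directly.
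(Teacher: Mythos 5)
There is a genuine gap, and you identify it yourself. First, a point of comparison: the paper does not prove this lemma at all --- it is recalled as a classic result, citing Theorem 4.5 of Hawkes, Isaacs and \"{O}zaydin \cite{HaIsOz} --- so your attempt has to stand as a self-contained proof of that theorem. The setup is fine: the reduction to $p$-adic valuations, Hall's theorem identifying $\mu(H,G)$ with the reduced Euler characteristic of the order complex of the open interval $(H,G)$, the choice of a $p$-subgroup $R\leq N_G(H)$ with $[R:R\cap H]=p^{v_p([N_G(H):H])}$, and the congruence $\tilde\chi\bigl((H,G)\bigr)\equiv\tilde\chi\bigl((H,G)^R\bigr) \bmod p$ for the $p$-group action are all correct. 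But this machinery can only ever yield a single factor of $p$: the fixed-point congruence is a statement modulo $p$, not modulo $[R:R\cap H]$, and it does not even give $p\mid\mu(H,G)$ unless one separately proves that $\tilde\chi$ of the subposet of $R$-invariant intermediate subgroups is divisible by $p$ --- a claim you never establish (note, e.g., that an empty fixed subposet contributes $\tilde\chi=-1$).

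The statement, however, requires $\mu(H,G)$ to be divisible by $p^{v_p([N_G(H):H])}$, or by that power divided by $p$ when $p$ divides $[G:HG']$ --- an arbitrarily high prime power. Your paragraphs about analysing $R$-orbits on chains through their stabilizers, about an $R$-fixed normal coatom letting the fixed complex ``absorb'' a factor of $p$ (the logic there is in fact backwards: when $p\mid[G:HG']$ you need \emph{less} from $\mu(H,G)$, not more), and about tying the ``defect'' to the $p$-rank of $G/HG'$ describe what a proof would have to accomplish; they are not arguments. The naive orbit decomposition $\chi(X)=\sum_{\sigma\in X/R}(-1)^{\dim\sigma}[R:R_\sigma]$ fails to produce high powers of $p$ because chains of $R$-invariant subgroups have large stabilizers. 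This missing step is precisely the mathematical content of the cited theorem, whose published proof proceeds by a delicate induction with lattice-theoretic reductions (Crapo complementation and passage to normal intermediate subgroups), not by a soft equivariant-topology argument; completing your program would require a genuinely new idea that the proposal does not supply.
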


Finally, we will need a different expression for $\eGnGg$ to guarantee that the element $\rho\left([G:G^{[n]}]\eGnGg\right)$ of $\widetilde{B^{C_n}_{\Q[\zeta]}}(G)$ actually lies in $\gBnGz$. Although most of it are straightforward adaptations of the techniques in Müller's proof, there are some subtle differences as we work in a more general framework and we believe that it is worth it to go through the details. So let $e^G_G\in B_{\Q}(G)$ be the idempotent corresponding to the $G$-th mark $\varphi_G$, and consider an expression
$$e^G_G=\sum_{H\in [\mathcal{S}_G]}a_H[G/H]$$
in terms of the standard basis of $B_{\Q}(G)$, where $a_H$ is a rational number for each $H$. For every $\lambda\in \text{Hom}(G,C_n)$, set
$$x_{\lambda}:=\sum_{H\in [\mathcal{S}_G]}a_H[H,\lambda|_H]_G\in B^{C_n}_{\Q}(G).$$
Note that $x_1$ agrees with the image of $e^G_G$ under the map $[G/H]\mapsto [H,1]_G$. For $K\leq G$, we have 
$$\pi_K\text{res}^G_K([H,\lambda|_H]_G)=\sum_{\substack{
gH\in G/H,\\
K\leq {^gH}
}}{^g\lambda|_K}=\sum_{\substack{
gH\in G/H,\\
K\leq {^gH}
}}{\lambda|_K}=\varphi_K(G/H)\lambda|_K$$
where $^g\lambda=\lambda$ since $G$ acts trivially on $\text{Hom}(G,C_n)$. Therefore,
\begin{align}
\pi_K\text{res}^G_K(x_{\lambda})=\varphi_K(e^G_G)\lambda|_K=\begin{cases}
    \lambda, &\text{if }K=G,\\
    0, &\text{otherwise};
\end{cases}
\end{align}
in particular, the element $y_{\lambda}:=\rho(x_{\lambda})$ lies in $\widetilde{B^{C_n}}(G)$. Then applying $\rho$ to the sum $\sum_{\lambda\in \text{Hom}(G,C_n)}\overline{\lambda(g)}x_{\lambda}$ in $B^{C_n}_{\Q[\zeta]}(G)$, we have that
\begin{align}
    y_g:=\rho\left(\sum_{\lambda\in \text{Hom}(G,C_n)}\overline{\lambda(g)}x_{\lambda}\right)=\sum_{\lambda \in \text{Hom}(G,C_n)}\overline{\lambda(g)}y_{\lambda}
\end{align}
lies in $\widetilde{B^{C_n}_{\Z[\zeta]}}(G)$. Any species $\sGnHh$ with $H$ a proper subgroup vanishes on $\sum_{\lambda\in \text{Hom}(G,C_n)}\overline{\lambda(g)}x_{\lambda}$ since $\pi_H\text{res}^G_H$ does, while
$$s^{G,n}_{G,z}\left(\sum_{\lambda\in \text{Hom}(G,C_n)}\overline{\lambda(g)}x_{\lambda}\right)=\sum_{\lambda\in \text{Hom}(G,C_n)}\overline{\lambda(g)}\lambda(z)=\begin{cases}
    [G:G^{[n]}] &\text{if }zG^{[n]}=gG^{[n]},\\
    0 &\text{otherwise},
\end{cases}$$
by the column orthogonality relations for $G/G^{[n]}$. Hence, we have
\begin{align}
    e^{G,n}_{G,g}=\frac{1}{[G:G^{[n]}]}\sum_{\lambda\in \text{Hom}(G,C_n)}\overline{\lambda(g)}x_{\lambda}\in B^G_{\Q[\zeta]}(G),
\end{align}
and so $\rho\left([G:G^{[n]}]\eGnGg\right)=y_g\in \widetilde{B^{C_n}_{\Z[\zeta]}}(G)$.

\begin{prop} \thlabel{GRProp2}
    Let $(H,hH^{[n]})\in \DnG$. Then $[N_G(H,hH^{[n]}):H^{[n]}][H^{[n]}:H']_0\eGnHh$ belongs to $\BnGz$.
\end{prop}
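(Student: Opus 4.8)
The plan is to reduce the statement to the case $H=G$ and then verify Boltje's integrality criterion. By \thref{indresids}(3) we have $\text{ind}^G_H\bigl(e^{H,n}_{H,h}\bigr)=[N_G(H,hH^{[n]}):H]\,\eGnHh$, so, using $[N_G(H,hH^{[n]}):H^{[n]}]=[N_G(H,hH^{[n]}):H][H:H^{[n]}]$,
\begin{align*}
[N_G(H,hH^{[n]}):H^{[n]}][H^{[n]}:H']_0\,\eGnHh=[H:H^{[n]}][H^{[n]}:H']_0\,\text{ind}^G_H\bigl(e^{H,n}_{H,h}\bigr).
\end{align*}
Since induction sends standard basis elements to standard basis elements, it maps $B^{C_n}_{\Z[\zeta]}(H)$ into $\BnGz$; hence it suffices to show that $[H:H^{[n]}][H^{[n]}:H']_0\,e^{H,n}_{H,h}$ lies in $B^{C_n}_{\Z[\zeta]}(H)$. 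After renaming $H,h$ to $G,g$, this is exactly the assertion $[G:G^{[n]}][G^{[n]}:G']_0\,\eGnGg\in\BnGz$.

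For this reduced statement I would invoke the identity $\rho\bigl([G:G^{[n]}]\eGnGg\bigr)=y_g$ established above, so that the goal becomes $[G^{[n]}:G']_0\,y_g\in\rho\bigl(\BnGz\bigr)$, which can be tested with \thref{BolCong}. The coordinates of $y_g$ vanish on every proper subgroup, and its $G$-coordinate is $\sum_{\lambda\in\text{Hom}(G,C_n)}\overline{\lambda(g)}\,\lambda$; consequently, in the congruence of \thref{BolCong} for a pair $(H,\phi)\in\MnG$ only the terms with $K=G$ survive, and these occur only when $G\leq N_G(H,\phi)$, that is, when $N_G(H,\phi)=G$. For such a pair the left-hand side of the congruence equals
\begin{align*}
\mu(H,G)\,[G^{[n]}:G']_0\sum_{\substack{\psi\in\text{Hom}(G,C_n)\\ \psi|_H=\phi}}\overline{\psi(g)},
\end{align*}
and I would evaluate the inner sum with \thref{MullerLemma}(3) (after the substitution $\psi\mapsto\psi^{-1}$): it vanishes unless $\phi\in\text{Im}(\text{res}^G_H)$ and $g\in HG^{[n]}$, in which case it equals $[G:HG^{[n]}]$ times a root of unity.

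Since a root of unity is a unit in $\Z[\zeta]$ and the modulus $[N_G(H,\phi):H]=[G:H]$ is a rational integer, the congruence holds precisely when $[G:H]$ divides $\mu(H,G)[G^{[n]}:G']_0[G:HG^{[n]}]$ in $\Z$. Cancelling the common factor $[G:HG^{[n]}]$ from $[G:H]=[G:HG^{[n]}][HG^{[n]}:H]$, the entire problem reduces to the arithmetic divisibility
\begin{align*}
[HG^{[n]}:H]\ \big|\ \mu(H,G)\,[G^{[n]}:G']_0\qquad\text{for every }H\unlhd G.
\end{align*}
This last step is where the real work lies, and I expect it to be the main obstacle. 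I would deduce it from \thref{Rota} by a prime-by-prime comparison: fixing a prime $p$, \thref{Rota} (with $N_G(H)=G$) gives $v_p(\mu(H,G))\geq v_p([G:H])-v_p([G:HG']_0)$, while $v_p([G^{[n]}:G']_0)\in\{0,1\}$. Writing $[G:H]=[G:HG^{[n]}][HG^{[n]}:H]$, the desired inequality reduces to showing that whenever $p\mid[G:HG']$ one has $p\mid[G:HG^{[n]}]$ or $p\mid[G^{[n]}:G']$; this in turn follows by factoring $[G:HG']=[G:HG^{[n]}][HG^{[n]}:HG']$ and observing that $[HG^{[n]}:HG']$ divides $[G^{[n]}:G']$. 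Assembling these valuation estimates establishes the divisibility for all normal $H$, thereby verifying the congruence of \thref{BolCong} and completing the proof.
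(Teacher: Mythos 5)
Your proposal is correct and follows essentially the same route as the paper's proof: reduction to the case $H=G$ via \thref{indresids}, then verification of Boltje's criterion for $[G^{[n]}:G']_0 y_g$ using \thref{MullerLemma}, the divisibility $[HG^{[n]}:HG']\mid[G^{[n]}:G']$, and \thref{Rota}. The only differences are cosmetic: you perform the induction reduction first rather than last, and you phrase the final arithmetic as a prime-by-prime valuation estimate where the paper chains divisibilities directly.
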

\begin{proof}
We first consider $(H,hH^{[n]})=(G,gG^{[n]})$. From the previous discussion, we have that $y_g=\rho\left([G:G^{[n]}]e^{G,n}_{G,g}\right)\in \gBnGz$, so we now analyse the left-hand sum in Congruence 9 for $y_g=\left(\sum_{\psi\in \text{Hom}(K,C_n)}a_{(K,\psi)}\psi\right)_{K\leq G}$ and a monomial pair $(H,\phi)$. The congruence is trivially satisfied whenever $H$ is not normal since then $N_G(H,\phi)$ is a proper subgroup, and same when $\phi$ is not the restriction of some $\lambda:G\longrightarrow C_n$ since then $K$ has to be a proper subgroup for all $(H,\phi)\leq (K,\psi)$. So assume that $H\unlhd G$ and $\phi=\lambda|_H$ for some $\lambda:G\longrightarrow C_n$, then
\begin{align}
        \sum_{(H,\phi)\leq (K,\psi)\in \mathcal{M}^{C_n}_{N_G(H,\phi)}}\mu(H,K)a_{(K,\psi)}&=\mu(H,G)\sum_{
        \lambda\in \left(\text{res}^G_H\right)^{-1}(\phi)
        }\overline{\lambda(g)}\\
        &=\begin{cases}
            \mu(H,G)[G:HG^{[n]}]\overline{\phi(g)} &\text{if }g\in HG^{[n]},\\
            0 &\text{else}
        \end{cases}
\end{align}
by part 3 of \thref{MullerLemma}. In any case, because $[HG^{[n]}:HG']$ divides $[G^{[n]}:G']$, then $[G:HG']_0$ divides $[G:HG^{[n]}][G^{[n]}:G']_0$, and by \thref{Rota} we have that $[N_G(H):H]$ divides $\mu(H,G)[G:HG^{[n]}][G^{[n]}:G']_0$. By \thref{BolCong}, the element 
$$[G^{[n]}:G']_0y_g=\rho\left([G:G^{[n]}][G^{[n]}:G']_0\eGnGg\right)$$ 
belongs to $\rho\left(\BnGz\right)$, hence $[G:G^{[n]}][G^{[n]}:G']_0e^{G,n}_{G,g}$ belongs to $\BnGz$. 

For the general case, note that
\begin{align*}
    [N_G(H,hH^{[n]}):H^{[n]}][H^{[n]}:H']_0e^{G,n}_{H,n} &=[H:H^{[n]}][H^{[n]}:H']_0\text{ind}^G_H\left(e^{H,n}_{H,h}\right)\\
    &=\text{ind}_H^G\left([H:H^{[n]}][H^{[n]}:H']_0e^{H,n}_{H,h}\right).
\end{align*}
by \thref{indresids}, and since we have proved that $[H:H^{[n]}][H^{[n]}:H']_0e^{H,n}_{H,h}$ belongs to $B^{C_{n}}_{\Z[\zeta]}(H)$, then
$[N_G(H,hH^{[n]}):H^{[n]}][H^{[n]}:H']_0e^{G,n}_{H,n}$ belongs to $\BnGz$.
\end{proof}

We now prove a partial converse to \thref{GRProp2}.

\begin{prop}\thlabel{GRProp1}
    Let $n$ be a divisor of $\text{exp}(G)$ such that $(n,\text{exp}(G)/n)=1$, and $(H,hH^{[n]})\in \DnG$. Then $[N_G(H,hH^{[n]}):H^{[n]}][H^{[n]}:H']_0$ divides $c^{G,n}_{H,h}$.
\end{prop}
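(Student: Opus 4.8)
The plan is to prove, one prime at a time, that for every rational prime $p$ one has
$$v_p(\cGnHh)\ \ge\ v_p\bigl([N_G(H,hH^{[n]}):H^{[n]}]\bigr)+v_p\bigl([H^{[n]}:H']_0\bigr),$$
where $v_p$ is the $p$-adic valuation; together with \thref{GRProp2} this gives the equality of \thref{GRThm}. The first point is to separate the primes using $(n,\text{exp}(G)/n)=1$. If $p\mid n$ then $v_p(\text{exp}(G))=v_p(n)\ge v_p(\text{exp}(H/H'))$, so from the description $H^{[n]}/H'=(H/H')^{[n]}$ of \thref{lemma1} one reads off $p\nmid[H^{[n]}:H']$; hence every prime dividing the square-free number $[H^{[n]}:H']_0$ is coprime to $n$. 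For such a $p$ the order $[H:H^{[n]}]=|\text{Hom}(H,C_n)|$ is a $p$-unit, so at $p$ the idempotent behaves exactly as in the Burnside ring. This separation is what will let the two factors combine into a product rather than a least common multiple.

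Next I reduce to the case $N_G(H,hH^{[n]})=G$. Restriction to $N:=N_G(H,hH^{[n]})$ preserves $\Z[\zeta]$-integrality and, by the restriction formula for idempotents (cf. \thref{indresids}), isolates $e^{N,n}_{H,h}$ as a summand with first coordinate $H$; hence $c^{N,n}_{H,h}$ divides $\cGnHh$, and it suffices to treat $G=N_G(H,hH^{[n]})$, that is, $H\unlhd G$ with $hH^{[n]}$ being $G$-invariant. In this case the coefficient of the basis element $[H,1]_G$ in $\eGnHh$ is, by \thref{Idempotents}, exactly $|H^{[n]}|/|G|$: only the summand $K=H$, $\phi=1$ contributes, since a basis element with first coordinate conjugate to $H$ forces $K=H$, and then $\phi=1$ is the only map with $[H,\phi]_G=[H,1]_G$. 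This rational number has denominator $[G:H^{[n]}]=[N_G(H,hH^{[n]}):H^{[n]}]$, which therefore divides $\cGnHh$; this already settles every prime with $p\mid n$.

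It remains to gain one further factor of $p$ for each $p\mid[H^{[n]}:H']_0$ (so $p\nmid n$). For such $p$ I examine the coefficient in $\eGnHh$ of a basis element $[K_0,1]_G$ with $K_0\le H$ characteristic in $H$, hence normal in $G$. Collecting the terms of \thref{Idempotents} and applying the Möbius-weighted column-orthogonality of \thref{MullerLemma} to the inner sum over $\text{Hom}(H,C_n)$, this coefficient equals
$$\frac{\mu(K_0,H)\,|K_0\cap H^{[n]}|}{|G|}$$
whenever $h\in K_0H^{[n]}$, and a root-of-unity multiple of the same quantity on a suitably twisted basis element $[K_0,\psi_0]_G$ otherwise, so that the condition $h\in K_0H^{[n]}$ can always be arranged. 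The desired bound $v_p(\cGnHh)\ge v_p([G:H^{[n]}])+1$ then amounts to choosing $K_0$ with
$$v_p\bigl([H^{[n]}:K_0\cap H^{[n]}]\bigr)\ \ge\ v_p(\mu(K_0,H))+1 .$$

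The main obstacle is precisely the existence of such a $K_0$ with the right Möbius valuation: neither $K_0=1$ nor $K_0=H'$ works uniformly, since $\mu(1,C_{p^2})=0$ while $\mu(H',H)=0$ unless $H/H'$ is elementary abelian at $p$, so the choice must be dictated by the $p$-local structure of the abelianization $H/H'$, exactly as in the classical Burnside-ring computation of conductors. There \thref{Rota} of Hawkes--Isaacs--Özaydin supplies the comparison between $[H:K_0]$, $\mu(K_0,H)$ and $[H:K_0H']_0$ that pins down the valuation of $\mu(K_0,H)$ for the relevant $K_0$, and the hypothesis $p\nmid n$ (which makes $H^{[n]}$ and $H$ share the same $p$-part of abelianization) guarantees that the subgroup found in the Burnside situation still meets the displayed inequality. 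Multiplying this extra $p$ onto the denominator $[G:H^{[n]}]$ coming from $[H,1]_G$ is legitimate precisely because the coprimality hypothesis keeps the two relevant sets of primes apart, and this completes the proof.
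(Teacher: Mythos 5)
Your opening moves match the paper's: the coefficient of $[H,1]_G$ in $\eGnHh$ is $1/[N_G(H,hH^{[n]}):H^{[n]}]$, which gives the first factor, and the observation that $(n,\text{exp}(G)/n)=1$ forces $[H:H^{[n]}]$ and $[H^{[n]}:H']$ to be coprime is exactly the paper's separation of primes. Your coefficient formula $\mu(K_0,H)\,|K_0\cap H^{[n]}|/|G|$ for a $G$-normal $K_0$ with $h\in K_0H^{[n]}$ is also correct. But the heart of the proof --- gaining the extra factor of $p$ for each $p\mid [H^{[n]}:H']_0$ --- is missing, and your proposed route cannot supply it. Restricting to $K_0$ \emph{characteristic} in $H$, the required inequality $v_p([H^{[n]}:K_0\cap H^{[n]}])\ge v_p(\mu(K_0,H))+1$ can fail for \emph{every} characteristic subgroup: take $G=H=C_p\times C_p\times C_p$ and $n=1$ (so $H^{[n]}=H$). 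The only characteristic subgroups are $1$ and $H$; since $\mu(1,H)=-p^{3}$, the candidate $K_0=1$ needs $3\ge 4$, and $K_0=H$ gives nothing. The corresponding coefficients are $-1$ and $1$, both integral, so your method detects no $p$ at all --- yet $c^{G,1}_{G,1}=p$ here. Moreover, your fallback of ``twisting'' to $[K_0,\psi_0]_G$ when $h\notin K_0H^{[n]}$ is false: the fiber over \emph{any} $\psi_0$ in \thref{MullerLemma}(3) sums to zero when $h\notin K_0H^{[n]}$, so every basis element with first coordinate $K_0$ then has coefficient $0$ and carries no information. Finally, \thref{Rota} points the wrong way: it bounds $[N_G(H):H]$ by a multiple of $\mu$, which is why the paper uses it for the \emph{upper} bound (\thref{GRProp2}); it cannot certify a lower bound on $v_p(\mu(K_0,H))$.

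What the paper does instead, and what your argument needs, is to drop normality and count conjugates: for $p\mid[H^{[n]}:H']$, consider $\mathcal{S}_p=\{K\le H : [H:K]=p\}$. Since $p\nmid[H:H^{[n]}]$, one gets $KH^{[n]}=H$ (so $h\in KH^{[n]}$ automatically --- this is how the coprimality hypothesis really enters) and $(\text{res}^H_K)^{-1}(1)=\{1\}$, so the coefficient of $[K,1]_G$ in $[N_G(H,hH^{[n]}):H^{[n]}]\eGnHh$ is $-\#[K]/p$, where $[K]$ is the $G$-conjugacy class of $K$ inside $\mathcal{S}_p$, with $\mu(L,H)=-1$ for all $L\in[K]$. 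A counting argument gives $|\mathcal{S}_p|\equiv 1 \bmod p$ (normal members biject with index-$p$ subgroups of $H/H'$, non-normal ones fall into $H$-classes of size $p$), so some class $[K]$ has size prime to $p$, forcing $p\mid s$. Note also that your preliminary reduction to $N=N_G(H,hH^{[n]})$ is both unjustified and unnecessary: from $t\,\text{res}^G_N(\eGnHh)\in B^{C_n}_{\Z[\zeta]}(N)$ and $\text{res}^G_N(\eGnHh)=\sum_i e_i$ you cannot conclude that $t\,e^{N,n}_{H,h}$ alone is integral, since integrality of a sum of idempotents does not pass to summands; the paper avoids this entirely by reading off coefficients directly in $B^{C_n}_{\Q[\zeta]}(G)$, where the stabilizer $N_G(H,hH^{[n]})$ appears on its own in the $[H,1]_G$ coefficient.
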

\begin{proof}
    Let $t$ be a non-zero integer such that $te^{G,n}_{H,h}\in \BnGz$. From Equation 4, we see that the coefficient of $[H,1]_G$ in $e^{G,n}_{H,h}$ with respect to the monomial basis is
    $$\frac{|H^{[n]}|}{|N_G(H,hH^{[n]})||H|}|H|\mu(H,H)1(h)=\frac{1}{[N_G(H,hH^{[n]}):H^{[n]}]},$$
    therefore, $[N_G(H,hH^{[n]}):H^{[n]}]$ divides $t$. Set $s=[N_G(H,hH^{[n]}):H^{[n]}]^{-1}t$, and let
    $$y:=[N_G(H,hH^{[n]}):H^{[n]}]e^{G,n}_{H,h}.$$
    
    Since $\text{exp}(H/H')$ divides $\text{exp}(G)$, then $H^{[n]}/H'=\left(H/H'\right)^{[n]}$ is a product of $p$-primary components of $H/H'$ for some prime divisors $p$ of $\text{exp}(G)/n$, hence $[H:H^{[n]}]$ and $[H^{[n]}:H']$ are relatively prime. If $p$ is a prime divisor of $[H^{[n]}:H']$, we consider the set
    $$\mathcal{S}_p:=\left\{K\leq H\;|\;[H:K]=p\right\},$$
    which is non-empty as $p$ is also a divisor of $[H:H']$. Those $K\in \mathcal{S}_p$ that are normal in $H$ are in bijection with the subgroups of $H/H'$ of index $p$, and so the number of these subgroups is congruent to $1$ modulo $p$. If $K\in \mathcal{S}_p$ is not normal in $H$, then its $H$-conjugacy class has size $p$ and is contained in $\mathcal{S}_p$. Putting all together, we have
    $$|\mathcal{S}_p| \equiv |\{K\unlhd H\;|\;[H:K]=p\}| \equiv 1\;\text{mod }p.$$
    The set $\mathcal{S}_p$ splits into equivalence classes for the relation of being $G$-conjugate, and we write $[K]$ for the class of $K$. For every $K\in \mathcal{S}_p$, we have $|(\text{res}^H_K)^{-1}(1)|=[H:KH^{[n]}]=1$, since it is a divisor of both $[H:K]=p$ and $[H:H^{[n]}]$ which are coprime. Therefore, $\text{res}^H_K$ is injective, and the coefficient of $[K,1]_G$ in $y$ with respect to the monomial basis is
    $$\frac{1}{|H|}\left(\sum_{\substack{
    L\leq H,\\
    L =_{G}K
    }}\sum_{\substack{
    \phi\in \text{Hom}(H,C_n),\\
    \phi|_L=1
    }}|L|\mu(L,H)\overline{\phi(h)}\right) 
    =\frac{-1}{[H:K]}\left(\sum_{L\in [K]}1(h)\right)
    =-\frac{\#[K]}{p}$$
    since $\mu(L,H)=-1$ and $|L|=|K|$ for all $L\in [K]$. Because $|\mathcal{S}_p|$ is not divisible by $p$, there is a subgroup $K\in \mathcal{S}_p$ such that $\#[K]$ is not divisible by $p$. Then $sy\in \BnGz$ implies that $-s\#[K]/p$ is an integer, therefore that $p$ divides $s$. Hence $[H^{[n]}:H']_0$ divides $s$.
\end{proof}

\begin{proof}[Proof of \thref{GRThm}]
    The equality now follows from \thref{GRProp2} and  \thref{GRProp1}.
\end{proof}

\begin{exx}
    Let $\K$ be a field in which $X^{\text{exp}(G)}-1$ splits. If $\text{char}(\K)=p>0$ divides $|G|$ and $\text{exp}(G)=p^mn$ with $(p,n)=1$, then $D^{\K}(G)\cong \BnG$. For each $(H,hH^{[n]})\in \DnG$, we have two possible cases: if $p$ does not divide $[H:H']$, then $H^{[n]}=H'$, and $\cGnHh=[N_G(H,hH'):H']$; if $[H:H']=p^at$ for some $a\geq 1$ and $(p,t)=1$, then $[H:H^{[n]}]=t$ and $[H^{[n]}:H']=p^a$, hence $\cGnHh = [N_G(H,hH^{[n]}):H]tp$.
\end{exx}

\begin{remark}
    For any divisor $n$ of $\text{exp}(G)$, we always have that $[N_G(H,hH^{[n]}):H^{[n]}]$ divides $\cGnHh$, and by \thref{GRProp1}, $\cGnHh$ divides $[N_G(H,hH^{[n]}):H^{[n]}][H^{[n]}:H']_0$, hence, 
    $$r^{G,n}_{H,h}:=\frac{\cGnHh}{[N_G(H,hH^{[n]}):H^{[n]}]}$$ 
    is a divisor of $[H^{[n]}:H']_0$. If $(H,hH^{[n]})=(G,gG^{[n]})$, the number $r^{G,n}_{G,g}$ is the least positive integer such that $r^{G,n}_{G,g}\rho\left([G:G^{[n]}]\eGnGg\right)$ belongs to $\rho\left(B^{C_n}_{\Z[\zeta]}(G)\right)$, and by Boltje's integrality criterion and Equation 14, it is the minimum positive integer such that $\mu(H,G)[G:HG^{[n]}]r^{G,n}_{G,g}$ is divisible by $[N_G(H):H]$ for each $H\unlhd G$ such that $g\in HG^{[n]}$. Back to the general case, using $\text{ind}^G_H$ we have that a possibly better upper bound for $r^{G,n}_{H,h}$ is
    $$u^{G,n}_{H,h}=\text{lcm}\left\{\frac{[KH^{[n]}:KH']_0}{([H:KH^{[n]}],[KH^{[n]}:KH'])_0}\;\middle| \;K\unlhd H, h\in KH^{[n]}\right\}$$
    since we might be discarding some unnecessary primes appearing in $[H^{[n]}:H']_0$. Of course, the expression for $u^{G,n}_{H,h}$ is far from easy to compute or handle, but it could be the case, as in \thref{GRThm}, that it does not depend on $h$. In fact, if $(n,\text{exp}(G)/n)=1$, we have $([H:KH^{[n]}],[KH^{[n]}:KH'])=1$ for all $K$, and so $u^{G,n}_{H,h}=[H^{[n]}:H']_0=r^{G,n}_{H,h}$.
\end{remark}

\subsection*{Acknowledgments}

The authors thank the reviewer for their insightful suggestions. The first author was supported by CONAHCYT with an EPM Postdoctoral grant.

\end{document}